\documentclass[11pt,a4paper]{amsart}

\usepackage[T1]{fontenc}
\usepackage[utf8]{inputenc}
\usepackage{microtype}
\usepackage{geometry}
\usepackage{amsmath,amssymb,amsthm,mathtools}
\usepackage{enumitem}
\usepackage{hyperref}
\usepackage{booktabs}

\makeatletter
\g@addto@macro\@adminfootnotes{%
  \ifx\@empty\addresses\else
    \@footnotetext{\@setaddresses}%
    \global\let\addresses\@empty
  \fi
}
\makeatother

\hypersetup{
  colorlinks=true,
  linkcolor=blue,
  citecolor=blue,
  urlcolor=blue,
  pdftitle={Remarks on the monodromy conjecture for symmetric determinantal hypersurfaces},
  pdfauthor={Renjie Lyu and Xiping Zhang}
}
\setlist[itemize]{leftmargin=2em}
\setlist[enumerate]{leftmargin=2.4em}

\newtheorem{theorem}{Theorem}[section]
\newtheorem{conjecture}[theorem]{Conjecture}
\newtheorem{proposition}[theorem]{Proposition}
\newtheorem{lemma}[theorem]{Lemma}
\newtheorem{corollary}[theorem]{Corollary}
\newtheorem{remark}[theorem]{Remark}
\theoremstyle{definition}
\newtheorem{definition}[theorem]{Definition}
\theoremstyle{remark}

\newcommand{\C}{\mathbf C}
\newcommand{\PP}{\mathbf P}
\newcommand{\Sym}{\operatorname{Sym}}

\newcommand{\GL}{\operatorname{GL}}

\newcommand{\Fl}{\operatorname{Fl}}
\newcommand{\CQ}{\operatorname{CQ}}
\newcommand{\Tot}{\operatorname{Tot}}
\newcommand{\Bl}{\operatorname{Bl}}
\newcommand{\ord}{\operatorname{ord}}
\newcommand{\rank}{\operatorname{rank}}
\newcommand{\codim}{\operatorname{codim}}
\newcommand{\Spec}{\operatorname{Spec}}

\newcommand{\cC}{\mathcal C}
\newcommand{\cD}{\mathcal D}

\title{Remarks on the monodromy conjecture for symmetric determinantal hypersurfaces.}
\author{Renjie Lyu}
\address{Mathematical School of Science, Xiamen University, Xiamen, 361005, China.} 
\email{r.lyu@xmu.edu.cn}

\author{Xiping Zhang}
\address{School of Mathematical Sciences, Tongji University, Shanghai, China.}
\email{xzhmath@gmail.com}

\subjclass[2020]{14F10,14G10,32S40,14M12,14M27}
\keywords{topological zeta function, monodromy conjecture, b-functions, determinantal hypersurface, complete quadrics}

\begin{document}
\begin{abstract}
We compute the local and global Denef--Loeser topological zeta functions
associated to the determinant on the space of symmetric matrices. It verifies
the strong monodromy conjecture of the symmetric determinant function.
The computation relates to the geometry of complete quadrics and 
wonderful embeddings.
\end{abstract}
\maketitle
\setcounter{tocdepth}{1}
\tableofcontents

\section{Introduction}
\label{sec:introduction}

Consider a nonconstant complex polynomial \(f\in \C[x_1,\ldots, x_n]\). 
Let \(\cD=\C[x_i, \frac{\partial}{\partial x_j}]\) be the Weyl algebra 
of differential operators. The \emph{Bernstein-Sato} polynomial of \(f\), 
also called the \emph{\(b\)-function}, is the monic polynomial \(b_f(s)\) 
of minimal degree satisfying
\begin{equation}
\label{eqs:Bernstein-Sato-poly}
  P(s)f^{s+1}=b_f(s)f^s
\end{equation}
for some differential operator $P(s)\in \cD[s]$. If \(f\colon (\C^n, x)\to (\C, 0)\)
is a holomorphic germ at a point \(x\in \C^n\), the local \(b\)-function 
\(b_{f,x}(s)\) is the monic polynomial of minimal degree satisfying 
an equation similar to~\eqref{eqs:Bernstein-Sato-poly} where \(P(s)\) 
is a local differential operator defined on a small neighbourhood 
of \(x\). 

The roots of the \(b\)-function measure the hypersurface 
singularities of \(\{f=0\}\) in the following two aspects.
Bernstein~\cite{Bernstein} used the equation~\eqref{eqs:Bernstein-Sato-poly} to illustrate 
that the negative integer translates of the roots of \(b_f(s)\) control 
the poles of the meromorphic continuation of the Archimedean zeta function
\begin{equation}
\label{eqs:arch-zeta-fun}
  I_\varphi(s)=\int_{\C^n} |f(x)|^{2s}\varphi(x)dV(x), \quad \varphi(x)\in C^{\infty}_c(\C^n)
\end{equation}
Kashiwara's theorem~\cite{Kashiwara} shows that the roots 
\(\{\alpha \mid b_f(\alpha)=0\}\) are negative rational numbers,
and the elements of \(\{\operatorname{exp}(2\pi i\alpha) \mid b_f(\alpha)=0\}\) are 
eigenvalues of the monodromy action on the cohomology of Milnor fibres 
at points in the hypersurface \(\{f=0\}\)~\cite{KashiwaraVanishing,Malgrange}.

Let \(\pi\colon Y\to \C^n\) be an embedded resolution of the hypersurface 
\(D\coloneqq f^{-1}(0)\subset \C^n\), and let \(\{E_i\}_{i\in S}\) be the 
irreducible components of the reduced scheme \(\pi^{-1}(D)_{red}\) with normal crossings.
Write
\[
  \operatorname{div}(f\circ \pi)=\sum_{i\in S}N_iE_i,
  \qquad
  \mathrm{div}(\mathrm{Jac}(\pi))=\sum_{i\in S}(\nu_i-1)E_i.
\]
The change of variables shows that the possible poles of the Archimedean 
zeta function \(I_\varphi(s)\) are contained in the set 
\(\{-\frac{\nu_i+k}{N_i} \mid i\in S,\ k\in\mathbf Z_{\geq 0}\}\)~\cite[Section~1.6]{DenefReport}.

The same numerical data also arise in non-Archimedean settings. 
Let \(p>0\) be a prime number, and let \(f\in \mathbf Q_p[x_1,\ldots,x_n]\setminus \mathbf Q_p\). 
Igusa’s \(p\)-adic zeta function is
\[
Z_{p}(f,\varphi,s)=\int_{\mathbf Q_p^n}|f(x)|_p^s\varphi(x)dx  
\]
where \(|\cdot|_p\) is the \(p\)-adic norm, \(dx\) is the Haar measure
on \(\mathbf Q_p^n\), and \(\varphi\) is locally constant with compact 
support on \(\mathbf Q_p^n\). Igusa’s zeta function is a rational 
function in the variable \(p^{-s}\), and an embedded resolution of \(\{f=0\}\) 
produces denominators of the forms \(1-p^{-\nu_i-N_is}\); 
see~\cite[Sections~1.1 and~1.3]{DenefReport}. Then the real parts of 
the candidate poles of \(Z_{p}(f,\varphi,s)\) belong to the set 
\(\{-\frac{\nu_i}{N_i} \mid i\in S\}\). For a polynomial \(f\) with 
rational coefficients, Igusa's original monodromy conjecture predicts 
that, for almost all primes \(p\), a pole \(s_0\) of its \(p\)-adic 
zeta function gives the monodromy eigenvalue 
\(\operatorname{exp}^{2\pi i\mathrm{Re}(s_0)}\) of the Milnor fibre 
at some point of the complex hypersurface \(\{f_\C=0\}\); see \cite{Igusa-b-fun}.

As a characteristic-zero geometric counterpart of Igusa's zeta function,
Denef and Loeser introduced the \emph{topological zeta function}~\cite{DenefLoeser}.
Recall the embedded resolution \(\pi\colon Y\to \C^n\) of \(f^{-1}(0)\)
and the divisors \(E_i\) with the numerical data \((\nu_i, N_i)\). 
The global topological zeta function is
\begin{equation}
\label{eqs:top-zeta-fun}
  Z^{\mathrm{top}}_f(s)
  =
  \sum_{I\subseteq S}
  \chi(E_I^\circ)
  \prod_{i\in I}\frac{1}{\nu_i+N_is}, \quad E_I^\circ=
  \left(\bigcap_{i\in I}E_i\right)
  \setminus \left(\bigcup_{j\notin I}E_j\right).
\end{equation}
Here $\chi$ denotes the compactly supported Euler characteristic.
For $x\in f^{-1}(0)$, the local topological zeta function is
\begin{equation}
\label{eqs:local-top-zeta-fun}
  Z^{\mathrm{top}}_{f,x}(s)
  =\sum_{I\subseteq S}
  \chi\bigl(E_I^\circ\cap \pi^{-1}(x)\bigr)
  \prod_{i\in I}\frac{1}{\nu_i+N_is}.
\end{equation}
These rational functions are independent of the chosen resolution~\cite{DenefLoeser}.

\begin{conjecture}
\label{conj:topological-monodromy}
Let \(f\in \C[x_1\ldots, x_n]\) be a nonconstant polynomial, and 
let $x\in f^{-1}(0)$ be any closed point.
\begin{enumerate}[label=\textup{(\roman*)}]
\item If \(\alpha\) is a pole of \(Z^{\mathrm{top}}_f(s)\), then
$\exp(2\pi i\alpha)$ is an eigenvalue of the monodromy action on 
the cohomology of the Milnor fibre at some point \(y\in \{f=0\}\).
\item If \(\alpha\) is a pole of \(Z^{\mathrm{top}}_f(s)\), then \(\alpha\)
is a root of \(b_f(s)\).
\end{enumerate}
For the local topological zeta function $Z^{\mathrm{top}}_{f,x}(s)$,
the corresponding assertions use $b_{f,x}(s)$ and the Milnor fibre
of the germ \(f\) at some point \(y\in f^{-1}(0)\) in a small neighbourhood of \(x\).
\end{conjecture}
The assertion \((ii)\) is called the \emph{strong monodromy conjecture}~\cite{Veys}
since it implies \((i)\). If the orders of poles and roots are taken into account, 
one may strengthen the second assertion to require
\[
b_{f}(s)Z^{\mathrm{top}}_{f}(s)\in\C[s].
\] 

The generic determinant, the Pfaffian, and the symmetric determinant 
are classical examples of relative invariants of prehomogeneous 
vector spaces, and their \(b\)-functions are known~\cite{SatoKimura,Kimura}. 
Docampo~\cite{Docampo} computed topological zeta functions of generic
square determinantal varieties. L\H{o}rincz--Raicu--Walther--Weyman~\cite{LRWW} 
proved the strong monodromy conjecture for maximal minors of generic 
\(m\times n\) matrices and sub-maximal Pfaffians of odd-sized 
skew-symmetric matrices. Chen--Zuo~\cite{ChenZuo} and Chen--Shi--Xu--Zuo~\cite{ChenShiXuZuo} 
obtained monodromy and holomorphy results for general generic 
determinantal ideals and Pfaffian ideals. Our initial aim
was to verify the strong monodromy conjecture for the symmetric
determinantal hypersurface, thereby complementing the literature. After 
finishing the first draft, we realized that the desired conjecture 
can be deduced from specializations of $p$-adic zeta functions 
and Igusa's monodromy conjecture of prehomogeneous vector spaces, 
which are explained at the end of the introduction. Nevertheless, 
in this paper, we obtain the exact formulas for the local and global 
topological zeta functions, which determine the explicit orders of poles.

Let $V$ be a complex vector space of dimension $n\geq2$, and set
\begin{equation}
\label{eq:symmetric-determinant}
  M_n=\Sym^2(V^*),\qquad f=\det\colon M_n\longrightarrow\C .
\end{equation}
We denote by $\Sigma_r=\{A\in M_n\mid\rank(A)\leq r\}$ for
$0\leq r\leq n$ the rank loci of the determinantal
hypersurface $D\coloneqq \Sigma_{n-1}=f^{-1}(0)$. Define the rational 
functions
\begin{align}
  F_{2m+1}(s)
  &=
  \frac{(2m+1)!!}
  {(s+1)\prod_{j=1}^{m}(2s+2j+1)}
  &&(m\geq0),                                      \label{eq:Fodd}\\
  F_{2m}(s)
  &=
  \frac{(2m-1)!!(s+2m+1)}
  {(s+1)\prod_{j=1}^{m}(2s+2j+1)}
  &&(m\geq1).                                      \label{eq:Feven}
\end{align}

\begin{theorem}
\label{thm:main}
The global topological zeta function of the symmetric determinant \(f\) is
\[
  Z^{\mathrm{top}}_f(s)=Z^{\mathrm{top}}_{f,0}(s)=F_n(s).
\]
For $x\in \Sigma_r \subseteq D$ of rank $r$, the local topological zeta function is
\[
  Z^{\mathrm{top}}_{f,x}(s)=F_{n-r}(s).
\]
The actual poles of \(Z^{\mathrm{top}}_{f,x}(s)\) are precisely
\begin{equation}
  \{-1\}\cup
  \left\{-\frac{2j+1}{2}\ \middle|\
  1\leq j\leq\left\lfloor\frac{n-r}{2}\right\rfloor\right\}.
  \label{eq:actual-poles}
\end{equation}
All these are simple poles. The local Bernstein--Sato polynomial is
\begin{equation}
  b_{f,x}(s)=
  \prod_{j=1}^{n-r}\left(s+\frac{j+1}{2}\right).
  \label{eq:local-b}
\end{equation}
Consequently, we have 
\[
b_{f,x}(s)Z^{\mathrm{top}}_{f,x}(s)\in\C[s]  
\]
which verifies the strong monodromy conjecture at every point of the
symmetric determinantal hypersurface. 
\end{theorem}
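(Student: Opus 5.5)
The plan is to compute $Z^{\mathrm{top}}_{f,0}$ from the log resolution given by the space of complete quadrics, reduce every local zeta function to this one via a normal slice, and then compare with the known $b$-function.

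\textbf{Resolution.} Blow up $M_n$ successively along the (strict transforms of the) rank loci $\Sigma_0\subset\Sigma_1\subset\cdots\subset\Sigma_{n-2}$. This is the affine cone over the wonderful compactification of $\GL_n/O_n$, i.e. over the complete quadrics $\CQ_n$. The result $\pi\colon Y\to M_n$ is an embedded resolution of $D=\Sigma_{n-1}$. Its exceptional divisors $E_0,\dots,E_{n-2}$ (with $E_r$ lying over $\Sigma_r$) together with the strict transform $E_{n-1}$ of $D$ form a simple normal crossing divisor. Every intersection $E_I$ fibres over a product of smaller complete-quadric and flag data.

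\textbf{Numerical data.} Let $c_r=\codim\Sigma_r=\binom{n-r+1}{2}$. At a generic point of $\Sigma_r$ the determinant vanishes to order $n-r$, so
\[
N_r=n-r,\qquad \nu_r=\binom{n-r+1}{2}.
\]
The ratios $\nu_r/N_r=(n-r+1)/2$ then give the candidate poles $-(k+1)/2$ for $k=1,\dots,n$.

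\textbf{Zeta function at the origin.} The fibre $\pi^{-1}(0)$ lies in $E_0$, and $E_0$ is the total space of a line bundle over $\CQ_{n}$ (the projectivised picture). I would compute the Euler characteristics of the strata $E_I^\circ\cap\pi^{-1}(0)$ by torus localisation. Equivalently, I would stratify by the $\GL_n$-orbits of $\CQ_n$, which are indexed by subsets of $\{1,\dots,n-1\}$ and fibre over partial flag varieties with open fibres products of smooth-quadric complements. Most of these open pieces have Euler characteristic $0$, because $\chi(\text{smooth affine quadric complement})$ vanishes in many cases. The surviving terms should yield a recursion of the form
\[
F_n(s)=\frac{1}{\nu_0+ns}\sum_{k}(\text{combinatorial coefficient})\,F_{k}(s)\cdots .
\]
I would prove $F_n$ solves it by induction, separately for the two parities. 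I expect this to be the main obstacle: the sum must be organised so that cancellations collapse it to the closed forms \eqref{eq:Fodd}--\eqref{eq:Feven}. An alternative I would try first is the Denef--Loeser formula through $p$-adic integrals. Igusa's explicit $p$-adic zeta function for $\det$ on symmetric matrices, specialised as $p\to1$, gives $F_n$ directly. This route is consistent with the remark in the introduction.

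\textbf{Global equals local.} The equality $Z^{\mathrm{top}}_f=Z^{\mathrm{top}}_{f,0}$ follows from $\C^*$-homogeneity. Off the zero fibre the strata are $\C^*$-stable with free orbits on the projectivisation, so their Euler characteristics vanish.

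\textbf{Other points.} For $x$ of rank $r$, a normal slice shows the germ $(M_n,x)$ with $f$ is locally isomorphic to $(M_{n-r}\times\C^{\dim\Sigma_r},0)$ with $\det_{n-r}\cdot$unit. Hence $Z^{\mathrm{top}}_{f,x}=F_{n-r}$ and $b_{f,x}=b_{\det_{n-r}}$. The Sato--Kimura $b$-function of the symmetric determinant is $\prod_{j=1}^{m}(s+\tfrac{j+1}{2})$, which gives \eqref{eq:local-b}.

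\textbf{Poles and the conjecture.} Set $m=n-r$ and read the poles off $F_m$. In the odd case the denominator is $(s+1)\prod(2s+2j+1)$, with no cancellation. In the even case one must check that the numerator factor $s+2m+1$ (taking the index as $2m$) cancels no denominator factor, which is clear since $s=-(2m+1)$ is not among the roots of the denominator. All poles are simple, and each lies among the roots $-1,-\tfrac32,\dots$ of $b_{f,x}$. So $b_{f,x}Z^{\mathrm{top}}_{f,x}$ is a polynomial.
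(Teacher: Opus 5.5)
Your plan agrees with the paper in every step except the one that carries the content, and that step is left open. You never compute $\chi(E_I^\circ\cap\pi^{-1}(0))$, and your recursion has unspecified coefficients and an unspecified range of $k$. So $Z^{\mathrm{top}}_{f,0}=F_n$, and with it the exact pole set, is not proved. The $p$-adic route is likewise only a pointer. You state no formula for Igusa's zeta function of the symmetric determinant, and specialising to $q\to1$ would need such a formula, uniform over unramified extensions and localised at the origin, to be cited or derived.

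The missing input is the following.
\begin{itemize}
\item $\pi^{-1}(0)=E_0$ is the zero section of $Y\cong\Tot(\rho^*\mathcal O(-1))\to\CQ(V)$. So $E_0\cong\CQ(V)$; it is not itself a line bundle over $\CQ(V)$.
\item For $0\in I$, the stratum $E_I^\circ\cap\pi^{-1}(0)$ is exactly one orbit $\mathcal O_\mu$, where $\mu$ is the ordered partition of $n$ with partial sums $\{n-r\mid r\in I\}$.
\item Let $U_d\subset\mathbf P(\Sym^2(\C^d)^*)$ be the complement of the \emph{discriminant} hypersurface, not of a smooth quadric. The fibration $\prod_a U_{\mu_a}\to\mathcal O_\mu\to\Fl_\mu(V)$ gives $\chi(\mathcal O_\mu)=\frac{n!}{\mu_1!\cdots\mu_k!}\prod_a\chi(U_{\mu_a})$.
\item You then need the exact values $\chi(U_1)=\chi(U_2)=1$ and $\chi(U_d)=0$ for $d\ge3$. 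One way is a $\mathbf G_m$-action with pairwise distinct weights on the monomials $e_i^*e_j^*$, whose fixed points all have rank $\le2$.
\end{itemize}
Saying the Euler characteristics vanish ``in many cases'' does not tell you which strata survive or with what weight.

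Once these values are in hand, there is no delicate cancellation to organise. Only $\mu$ with all parts in $\{1,2\}$ contribute, each with weight $n!/2^{q(\mu)}$ and factor $\prod_a\bigl(s_a(s+\frac{s_a+1}{2})\bigr)^{-1}$. The final partial sum $s_k=n$ contributes $1/(\nu_0+ns)$. Deleting the last part $\mu_k\in\{1,2\}$ leaves the term of a partition of $n-\mu_k$, with weight ratio $n$ or $\binom{n}{2}$. Hence $Z_n=\bigl(nZ_{n-1}+\binom{n}{2}Z_{n-2}\bigr)/(\nu_0+ns)$, i.e.\ $Z_n=\bigl(Z_{n-1}+\frac{n-1}{2}Z_{n-2}\bigr)/(s+\frac{n+1}{2})$ with $Z_0=1$ and $Z_1=1/(s+1)$. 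Checking that $F_n$ satisfies this is routine for each parity.

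The final assertion $b_{f,x}Z^{\mathrm{top}}_{f,x}\in\C[s]$ already follows from your numerical data alone. The ratios $\nu_r/N_r$ are pairwise distinct, so every pole is at most simple and lies at a root of $b_{f,x}$. What requires the computation above is the closed forms $F_{n-r}$ and the identification of the \emph{actual} poles.

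The rest of your plan is correct and is what the paper does: the numerical data, freeness of the $\C^*$-action off $E_0$ for global $=$ local, the normal slice, Kimura's $b$-function, and the pole check on $F_{n-r}$.
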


The embedded resolution of the determinantal hypersurface \(D\) is 
obtained by successive blow-ups along the rank strata
\(\{0\}=\Sigma_0\subset \Sigma_1\subset \ldots \subset \Sigma_{n-1}=D\).
The iterated strict transforms of \(\Sigma_r\) produce divisors 
\(E_r\) of the resolution. Gaube--Schober's description of the 
successive blow-ups~\cite{GaubeSchober} shows that the local structures 
of the iterated strict transforms of the strata are again symmetric determinantal 
singularities of smaller size. It follows that the coranks 
and codimensions of \(\Sigma_r\setminus \Sigma_{r-1}\) determine the 
numerical data \((N_r, \nu_r)\) of the divisors \(E_r\), leading to 
the candidate poles of the topological zeta function; see Corollary~\ref{cor:multi-discrepancy}. 
The transversal normal slices to the rank strata, obtained through 
discussion with AI, provide a more direct argument for the above 
assertion; see Lemma~\ref{lem:normal-slice}.

The computation of the exact topological zeta function uses the 
geometry of complete quadrics. The successive blow-ups of the projective
rank loci appear in Vainsencher's work on complete collineations and 
complete quadrics~\cite{VainsencherCompleteCollineations,VainsencherCompleteQuadrics}. 
This projective resolution is naturally a wonderful embedding 
of the space of smooth projective quadrics, which links to the theory 
of complete symmetric varieties developed by De Concini--Procesi~\cite{DeConciniProcesi}
and De Concini, Goresky, MacPherson, and Procesi~\cite{DeConciniGoreskyMacPhersonProcesi}.
The affine blow-up naturally admits a line bundle structure over the
projective blow-up at each stage; see Corollary~\ref{cor:affine-CQ}.
Then the geometry of the boundary orbits of the wonderful embedding 
gives a combinatorial characterization of the Euler characteristics 
needed to evaluate the topological zeta function~\eqref{eqs:top-zeta-fun}.

The specialization theorem of Denef and Loeser, as stated 
in~\cite[Theorem~4.3.2]{DenefReport}, shows that every topological 
pole occurs, at almost all places of good reduction, as an actual 
pole of the corresponding $p$-adic zeta function over suitable finite 
unramified extensions. The theorem of 
Kimura--Sato--Zhu~\cite[Theorem~3.1]{KimuraSatoZhu} therefore applies:
every real pole of the $p$-adic zeta functions associated with a 
reduced regular irreducible prehomogeneous vector space is a root of 
its $b$-function. Symmetric determinants belong to this class. This 
arithmetic argument yields the strong monodromy conjecture for 
our cases without identifying the precise values of the poles.
The contribution of the present work is the closed formulas of the 
topological zeta functions, and their geometric connections 
to the complete quadrics. 

In an independent work, Doan~\cite{Doan} computes the local topological 
zeta functions in a more general setting, namely the generic symmetric 
determinantal ideals and catalecticant determinantal ideals, and 
verifies the associated strong monodromy conjecture.

\subsection*{Organization}
Section~\ref{sec:affine-reso} discusses the rank strata resolution and 
its numerical data. Section~\ref{sec:zeta} relates the resolution to 
complete quadrics and computes the topological zeta function,
and completes verification of the strong monodromy conjecture.

\subsection*{Acknowledgements.}
The authors acknowledge the assistance of ChatGPT~5.5~Pro in
exploring the connection between the affine resolution and the
geometry of complete quadrics, and in the calculation of Euler 
characteristics. The manuscript is written by humans. The authors 
are responsible for the mathematical arguments and conclusions.

\section{Embedded resolution by rank strata}
\label{sec:affine-reso}
For \(0\le r\le n-1\), let 
\[
\Sigma_r=\{A\in M_n\mid \rank(A)\leq r\}
\]
be the locus of symmetric matrices of rank at most \(r\). As a subscheme, 
\(\Sigma_r\) is defined by the vanishing of all \(r+1\)-minors of 
\(n\times n\) symmetric matrices. For \(r=n-1\), \(\Sigma_{n-1}\) is 
the determinantal hypersurface \(D\subset M_n\). The rank loci
form natural strata
\[
  \{0\}=\Sigma_0\subset \Sigma_1\subset \cdots \subset \Sigma_{n-1}=D.
\] 
The singular locus of \(\Sigma_r\) is the sub-stratum \(\Sigma_{r-1}\).


\begin{theorem}
\cite[Theorem~B]{GaubeSchober}
\label{thm:inductive-blowup}
  Let \(r\in \mathbf Z_{+}\) be any positive integer with \(r < n\). 
  The following sequence of blow-ups is an embedded resolution of
  singularities for the stratum \(\Sigma_{r}\subset M_n\),
  \[
  X_{r}\xrightarrow{\pi_r} \cdots \xrightarrow{\pi_2} X_1\xrightarrow{\pi_1} X_0\coloneqq M_n
  \]
where \(\pi_\alpha\colon X_\alpha\to X_{\alpha-1}\) is the blow-up
with center the strict transform of \(\Sigma_{\alpha-1}\) in 
\(X_{\alpha-1}\). In particular, the composition 
\begin{equation}
\label{eqs:log-res}
  \pi = \pi_{1}\circ \cdots \circ \pi_{n-1}\colon X = X_{n-1}\to M_n
\end{equation}
is an embedded resolution of singularities for the determinantal hypersurface \(D\subset M_n\).
\end{theorem}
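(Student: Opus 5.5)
The plan is an induction on \(\alpha\) using a local product structure: near every point of every stratum of the rank stratification, the pair \((M_n,\{\Sigma_r\})\) is locally a product of a smooth factor with the pair \((M_k,\{\Sigma'_{r}\})\) for a smaller symmetric space. Concretely, let \(A\in\Sigma_r\setminus\Sigma_{r-1}\). Choose a basis with \(A=\begin{pmatrix}I_r&0\\0&0\end{pmatrix}\) and write a nearby symmetric matrix as \(\begin{pmatrix}P&Q\\Q^t&R\end{pmatrix}\) with \(P\) invertible. The Schur complement \(R-Q^tP^{-1}Q\) then gives an analytic (indeed étale-local) isomorphism
\[
(M_n,A)\cong (M_r\times \mathrm{Hom}\times M_{n-r},(I,0,0)).
\]
Under this isomorphism \(\Sigma_{r+k}\) corresponds to the product of the smooth factors with the rank-\(\le k\) locus \(\Sigma'_k\subset M_{n-r}\). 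This is the transversal normal slice statement.

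The first step is the blow-up of \(\Sigma_0=\{0\}\). The blow-up \(X_1=\Bl_0 M_n\) is the tautological line bundle over \(\PP(M_n)\). In the chart where a given entry is set to \(1\) the exceptional divisor is \(\{t=0\}\). Using the \(\GL(V)\)-action, which acts transitively on nonzero matrices of a fixed rank and lifts to every \(X_\alpha\), it suffices to examine charts centered at points lying over rank-\(k\) projective quadrics. There the strict transform of \(\Sigma_r\) is the cone-over-\(\PP(\Sigma_r)\) blown up at its vertex, i.e.\ locally the line bundle over \(\PP(\Sigma_r)\). By the slice lemma applied in \(\PP(M_n)\), near a point of rank \(k\) it is locally (smooth)\(\times\) the rank-\((r-k)\) locus in \(M_{n-k}\).

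The induction hypothesis is the following. At every point of \(X_{\alpha}\), the configuration consisting of the exceptional divisors \(E_0,\dots,E_{\alpha-1}\) together with the strict transforms \(\widetilde\Sigma_\beta\) for \(\beta\ge\alpha\) is locally isomorphic to
\[
(\text{coordinate hyperplanes } t_1\cdots t_j=0)\times(M_m,\{\Sigma'_{\beta-c}\})\times \C^{\text{rest}}
\]
for suitable \(m\) and shift \(c\). In particular \(\widetilde\Sigma_\alpha\) is smooth: locally it is a product with \(\Sigma'_0=\{0\}\). It also meets the \(E_i\) transversally, since the \(E_i\) are coordinate hyperplanes in other factors. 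Blowing it up is locally the blow-up of the origin in \(M_m\) times the identity, so by the first step the hypothesis propagates to \(X_{\alpha+1}\).

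At the end, \(\alpha=r+1\) for the resolution of \(\Sigma_r\). The strict transform \(\widetilde\Sigma_r\) is then locally a product with \(\Sigma'_{0}\)-type data, hence smooth and transversal to the simple normal crossing divisor \(E_0\cup\dots\cup E_r\). For \(r=n-1\) it is a smooth divisor, so the total transform of \(D\) has simple normal crossings. Each \(E_i\) is smooth and irreducible because it is a projective bundle over a smooth connected center.

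The main obstacle is making the chart computation of step one compatible with the slice isomorphism, namely verifying that after blowing up the vertex, the strict transforms of the higher strata near a rank-\(k\) point of the exceptional divisor really split off the coordinate \(t\) as a smooth product factor. I would handle this directly. In the chart normalizing the \((1,1)\) entry, write \(A=t\cdot B\) with \(B_{11}=1\). Then \(\rank A=\rank B\) off \(t=0\), so \(\widetilde\Sigma_r=\{\rank B\le r\}\), which is \(\C_t\times\) the affine rank locus in the chart of \(\PP(M_n)\). The Schur complement with respect to the pivot entry then reduces this to a size-\((n-1)\) symmetric rank locus, and iterating gives the claimed product form.
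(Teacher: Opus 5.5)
Your argument is correct and follows the mechanism the paper relies on. The paper does not prove this statement; it cites Gaube--Schober. The proof of Corollary~\ref{cor:multi-discrepancy}, together with Lemma~\ref{lem:normal-slice}, sketches the same inductive picture: after each blow-up, a chart with an invertible pivot plus a Schur complement turns the strict transforms into a smooth factor times rank loci of a smaller symmetric matrix space.

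Your version adds a self-contained proof. Using the \(\GL(V)\)-homogeneity of the rank strata, you only need the diagonal chart \(B_{11}=1\). The paper also has to treat the off-diagonal charts \(X_{ij}\), shrunk to avoid \(E^{(1)}_1\), where the size drops by two. By building the exceptional divisors into the local model as coordinate hyperplanes, you also derive the smoothness and normal-crossing properties that the paper simply takes from the citation.

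Two pieces of bookkeeping need fixing.

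First, your induction hypothesis as stated (``for suitable \(m\) and shift \(c\)'') does not imply that \(\widetilde\Sigma_\alpha\) is smooth. At a point with \(c<\alpha\), it would be modelled on \(\Sigma'_{\alpha-c}\), which is singular at the origin. You must carry \(c\ge\alpha\) at every point of \(X_\alpha\) as part of the hypothesis.
\begin{itemize}
\item It holds for \(\alpha=1\): every point of \(\Bl_0 M_n\) lies over some \([B]\) with \(\rank B=k\ge 1\), and there \(c=k\).
\item Over the center, your first step raises the shift by \(k\ge 1\).
\item At a point off \(\widetilde\Sigma_\alpha\), one already has \(c\ge\alpha+1\). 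Indeed, \(c=\alpha\) would put the point on \(\Sigma'_0\), i.e.\ on \(\widetilde\Sigma_\alpha\).
\end{itemize}
It follows that \(c=\alpha\) exactly along \(\widetilde\Sigma_\alpha\), which is what makes the center smooth.

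Second, the resolution of \(\Sigma_r\) in the statement is \(X_r\), so read off the conclusion at \(\alpha=r\), not \(\alpha=r+1\). At \(\alpha=r\), \(\widetilde\Sigma_r\) is smooth and meets \(E_0\cup\dots\cup E_{r-1}\) transversally. For \(r<n-1\), at \(\alpha=r+1\) the center \(\widetilde\Sigma_r\) has itself been blown up into \(E_r\). Only for \(r=n-1\) is the shift harmless, since blowing up the Cartier divisor \(\widetilde D\) is an isomorphism.

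With these two adjustments the argument is complete.
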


We set the following notation. For \(0\le r\le n-1\), set \(E^{(0)}_{r}\coloneqq \Sigma_r\). 
Inductively, for \(1\le \alpha\le n-1\), let \(E^{(\alpha)}_{r}\) be the 
strict transform of \(E^{(\alpha-1)}_{r}\subset X_{\alpha-1}\) in the 
blow-up \(X_\alpha\) of \(X_{\alpha-1}\) with the center 
\(E^{(\alpha-1)}_{\alpha-1}\). Write \(E_r\) for the final strict 
transform \(E^{(n-1)}_{r}\subset X_{n-1}\) over \(\Sigma_r\). Then
the divisors \(E_0, E_1, \ldots, E_{n-1}\) are smooth, meet with 
simple normal crossings, and satisfy
\[
  \pi^{-1}(D)_{\mathrm{red}} = \bigcup_{r=0}^{n-1}E_r.
\]

\begin{corollary}
\label{cor:multi-discrepancy}
Let \(\pi\colon X\to M_n\) be the embedded resolution~\eqref{eqs:log-res}. 
For each $0\leq r\leq n-1$, the multiplicity and log discrepancy of $E_r$ are
\begin{equation}
\label{eq:numerical-data}
  N_r=\ord_{E_r}(f\circ\pi)=n-r,
  \qquad
  \nu_r=1+\ord_{E_r}(K_{X/M})
  =\frac{(n-r)(n-r+1)}{2}.
\end{equation}
\end{corollary}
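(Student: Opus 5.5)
The numerical data are local invariants, so I will compute them at a general point of the exceptional divisor over the stratum \(\Sigma_r\setminus\Sigma_{r-1}\). The plan is to reduce to the case \(r=0\) by a transversal normal slice, and then use an inductive property of the blow-ups from Theorem~\ref{thm:inductive-blowup}: blowing up centres that meet a general point of \(E_r\) in codimension \(\ge 2\) does not change \(\ord_{E_r}\). So \(N_r\) and \(\nu_r\) can be read off at the stage \(X_{r+1}\), where \(E_r\) first appears as the exceptional divisor of \(\pi_{r+1}\).

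\textbf{Step 1 (normal slice).} Let \(A\in\Sigma_r\setminus\Sigma_{r-1}\). After a \(\GL(V)\)-change of coordinates we may take \(A=\begin{pmatrix}I_r&0\\0&0\end{pmatrix}\). Near \(A\), write a matrix as \(\begin{pmatrix}P&Q\\Q^T&R\end{pmatrix}\) with \(P\) invertible. The Schur complement \(S=R-Q^TP^{-1}Q\) gives a local analytic isomorphism
\[
M_n\cong (\text{open subset of } M_r\times \C^{r(n-r)})\times M_{n-r},
\]
under which \(f=\det P\cdot\det S\) with \(\det P\) a unit. It also identifies \(\Sigma_{r+k}\) with \((\text{smooth factor})\times\Sigma_k(M_{n-r})\). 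Because the blow-up centres are compatible with this product decomposition, the whole tower of blow-ups is locally the product of a smooth factor with the tower for \(M_{n-r}\). Hence \((N_r,\nu_r)\) for \(M_n\) equals \((N_0,\nu_0)\) for \(M_{n-r}\).

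\textbf{Step 2 (the case \(r=0\), size \(m=n-r\)).} The first blow-up \(\pi_1\) is the blow-up of the origin in \(M_m\cong\C^{m(m+1)/2}\), and \(E_0^{(1)}\) is its exceptional divisor. Since \(\det\) is homogeneous of degree \(m\) and nonzero, \(\ord_{E_0^{(1)}}(f\circ\pi_1)=m\). The relative canonical divisor of a point blow-up in dimension \(d=m(m+1)/2\) is \((d-1)E\), so \(\nu=d=m(m+1)/2\).

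\textbf{Step 3 (invariance under later blow-ups).} The later centres \(E^{(\alpha-1)}_{\alpha-1}\), for \(\alpha\ge 2\), are strict transforms of \(\Sigma_{\alpha-1}\). By the product description from Step 1 (Gaube--Schober), such a centre meets \(E_0^{(1)}\cong\PP(M_m)\) in the strict transform of the projective rank locus \(\PP\Sigma_{\alpha-1}\). That locus is a proper closed subset, since rank \(\le m-1\) matrices form the hypersurface \(D\). So the centres do not contain \(E^{(\alpha-1)}_0\). Blowing up a smooth centre not containing a divisor leaves that divisor's order of \(f\) and of \(K\) unchanged, because \(\ord_E\) is a valuation computed at its generic point, where the blow-up is an isomorphism. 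Therefore \(N_r=m=n-r\) and \(\nu_r=(n-r)(n-r+1)/2\).

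The main obstacle is Step 1: making precise that the iterated blow-up tower is étale-locally a product compatible with the Schur complement slice. The centres are defined by minor ideals rather than just as reduced sets, so I would first check that the ideal of \((r+k+1)\)-minors near \(A\) equals the ideal of \((k+1)\)-minors of \(S\). Since blow-up commutes with flat base change (the smooth-factor projection), the product structure then propagates through the tower inductively.
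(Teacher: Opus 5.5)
Your proof is correct. It follows the normal-slice route that the paper records in Lemma~\ref{lem:normal-slice} and the remark after it. The paper's written proof of the corollary instead works chart by chart, on the diagonal and off-diagonal charts of the Gaube--Schober blow-ups.

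\textbf{What each route buys.} The chart computation tracks the local form of every strict transform at every stage. This yields \(\pi^*(D)=\sum_r(n-r)E_r\) and the formula for \(K_{X/M_n}\) in one sweep. Your argument instead isolates that \(N_r\) and \(\nu_r\) are invariants of the divisorial valuation \(\ord_{E_r}\), so only the generic point of \(E_r\) matters. This is the cleaner conceptual point.

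\textbf{Your ``main obstacle'' is avoidable.} Push the valuation observation one step further:
\begin{itemize}
\item The maps \(\pi_1,\dots,\pi_r\) are isomorphisms over \(M_n\setminus\Sigma_{r-1}\).
\item Every later centre is the strict transform of some \(\Sigma_k\) with \(r<k\le n-2\). It is therefore irreducible of codimension at least \(3\) and cannot contain \(E_r^{(\alpha)}\).
\end{itemize}
So you only need the Schur-complement slice for the single blow-up \(\pi_{r+1}\). There the slice shows the centre is locally \(\{S=0\}\), smooth of codimension \(\binom{n-r+1}{2}\), with \(f=\det B\cdot\det S\) of multiplicity exactly \(n-r\) along it. No tower-wide product decomposition is needed. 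Nor is any comparison of minor ideals, since the centres are the reduced smooth strict transforms anyway. This one-step version is exactly the paper's remark.

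\textbf{Two small slips.}
\begin{itemize}
\item In Step~3, for \(\alpha\ge 3\) the centre in \(X_{\alpha-1}\) meets \(E_0^{(\alpha-1)}\), not \(E_0^{(1)}\cong\PP(M_m)\). The simplest reason it does not contain that divisor is the codimension count above.
\item For \(r=n-1\), \(E_{n-1}\) is the strict transform of \(D\), not the exceptional divisor of some \(\pi_{r+1}\). Your \(m=1\) case still gives the correct values \(N_{n-1}=\nu_{n-1}=1\).
\end{itemize}
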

  
\begin{proof}  
The computation uses the inductive local structures of the strict 
transforms in the successive blow-ups of Theorem~\ref{thm:inductive-blowup}. 

Let \((x_{ij})\) be coordinates of the space of \(n\times n\) symmetric 
matrices. Denote by \(f =\det (x_{ij})\) the determinant function. 
The first blow-up \(X_1\) admits a standard affine open cover. Let 
\(X_{ij}\) denote the open chart with the coordinates \(\tilde{x}_{ij}=x_{ij}\) 
and \(\tilde{x}_{k\ell}=\tilde{x}_{ij}x_{k\ell}, (k, \ell)\neq (i, j)\). 
On the \(X_{ii}\)-chart, we have
\[
  f\circ \pi_1|_{X_{ii}}=\tilde{x}_{ii}^{n}\cdot f'.
\]
The exceptional divisor \(E_0^{(1)}\) is given by \(\tilde{x}_{ii}=0\). 
After suitable row and column operations, one can choose coordinates 
\(y_{ij}\) with \(1\le i\leq j \le n-1\) that are transversal to 
\(\tilde{x}_{ii}\) such that 
\[
  f'=\det (y_{ij})
\] 
is the determinant of the \((n-1)\times (n-1)\) generic symmetric matrix 
\((y_{ij})\). Moreover, the ideal of the strict transform \(E^{(1)}_{r}\) 
for \(r\ge 1\) is given by the \(r\)-minors of \((y_{ij})\). Therefore
the subsequent blow-ups along \(E^{(r)}_r\) form an inductive process 
on the \((n-r)\times (n-r)\)-symmetric matrices.

On the \(X_{ij}\)-chart we have
  \[
  f\circ \pi_1|_{X_{ij}}=\tilde{x}_{ij}^{n}\cdot f'
  \]
where \(\tilde{x}_{ij}=0\) defines the exceptional divisor \(E_0^{(1)}\).
We may shrink the \(X_{ij}\)-chart such that \(E^{(1)}_1\cap X_{ij}=\emptyset\).
After suitable row and column operations, one can choose coordinates 
\(y_{ij}\) with \(1\le i\leq j \le n-2\) that are transversal to 
\(\tilde{x}_{ij}\) such that
  \[
  f'=\phi\cdot \det (y_{ij})
  \] 
where \(\phi\) is a unit and \((y_{ij})\) is a generic \((n-2)\times (n-2)\)
symmetric matrix. Then the ideal of the strict transform \(E^{(2)}_r\) for 
\(r\ge 2\) is given by the \((r-1)\)-minors of \((y_{ij})\). This open chart 
is unaffected by the blow up along \(E^{(1)}_{1}\). 

We give a coordinate-free summary of the above results. Each
blow-up \(X_\alpha\) admits an affine open cover which consists of 
open charts of two types, called the diagonal chart \(U_d\) and the 
off triangle chart \(U_l\). The stratified strict transforms
\[
  E^{(\alpha)}_\alpha\subset E^{(\alpha)}_{\alpha+1}\subset \cdots \subset E^{(\alpha)}_{n-1}
\] 
on the diagonal chart \(U_d\) are represented by the filtration
\begin{equation}
\label{eqs:stratified-normal-slice}
    \mathbf{C}^{m_\alpha}\times (Z^{(n-\alpha)}_{0}\subset Z^{(n-\alpha)}_{1}\subset \cdots \subset Z^{(n-\alpha)}_{n-\alpha-1})
\end{equation}
where \(m_\alpha=\dim E^{(\alpha)}_\alpha= \frac{\alpha(2n-\alpha+1)}{2}\) 
and \(Z^{(n-\alpha)}_{r}\) is the locus of \((n-\alpha)\times (n-\alpha)\) 
symmetric matrices of rank at most \(r\).

On the lower triangle chart \(U_l\), the lowest rank stratum 
\(E^{(\alpha)}_\alpha\) does not meet with \(U_l\). And the 
stratified strict transforms
  \[
  E^{(\alpha)}_{\alpha+1}\subset \cdots \subset E^{(\alpha)}_{n-1}
  \] 
correspond to the filtration
  \[
    \mathbf{C}^{m_{\alpha+1}}\times (Z_{0}^{(n-\alpha-1)}\subset Z_{1}^{(n-\alpha-1)}\subset \cdots \subset Z^{(n-\alpha-1)}_{n-\alpha-2})
  \]
where \(m_{\alpha+1}=\dim E^{(\alpha)}_{\alpha+1}=\frac{(\alpha+1)(2n-\alpha+2)}{2}\)
and \(Z^{(n-\alpha-1)}_{r}\) is the locus of \((n-\alpha-1)\times (n-\alpha-1)\)
symmetric matrices of rank at most \(r\).

In conclusion, every blow-up \(\pi_{\alpha+1}\) creates an exceptional 
divisor \(E^{(\alpha+1)}_{\alpha}\). The determinant function \(f\) has
order \(n-\alpha\) on a generic point \(\eta\in E^{(\alpha)}_{\alpha}\).
Hence 
\[
\pi_{\alpha+1}^*(E_{n-1}^{(\alpha)}) = E^{(\alpha+1)}_{n-1}+(n-\alpha)E^{(\alpha+1)}_{\alpha} 
\]
For \(r < \alpha\), the strict transforms \(E^{(\alpha)}_r\), as divisors 
in \(X_\alpha\), are smooth and transversely intersect with the center 
\(E^{(\alpha)}_\alpha\). Hence
\[
 \pi_{\alpha+1}^*(E^{(\alpha)}_r) = E^{(\alpha+1)}_r, \quad r < \alpha. 
\]
The relative canonical divisor of a blow-up along a smooth center of 
codimension \(c\) has discrepancy \(c-1\) on the exceptional divisor. 
Hence
\[
\mathrm{ord}_{E^{(\alpha+1)}_\alpha} \mathrm{Jac}(\pi_{\alpha+1}) = \frac{(n-\alpha+1)(n-\alpha)}{2}-1.  
\]
As a consequence,
\[
  \pi^*(D)=\sum_{r=0}^{n-1} (n-r)E_r, \qquad K_{X/M_n}=\sum_{r=0}^{n-1} \frac{(n-r+2)(n-r-1)}{2} E_r.
\]
\end{proof}



\paragraph{A transverse normal form}
Chatgpt 5.5 pro suggests the following 
transverse normal form of all rank strata near a matrix of fixed rank.
This normal slice, which is analogous to the inductive stratified strict 
transforms, simplifies the proof of Corollary~\ref{cor:multi-discrepancy}.


\begin{lemma}
\label{lem:normal-slice}
Let $0\leq r\leq n-1$, and let $A_0\in\Sigma_r\setminus\Sigma_{r-1}$.  
On a Zariski neighbourhood of $A_0$, there is an isomorphism of the 
rank-stratified space:
\begin{equation}
\label{eqs:stratifed-normal-slice}
  (M_n;\Sigma_r,\Sigma_{r+1},\ldots,\Sigma_n)
  \simeq
  (\text{a smooth factor})\times
  \bigl(\Sym^2(\C^{n-r})^*;
  \Sigma_0^{(n-r)},\ldots,\Sigma_{n-r}^{(n-r)}\bigr).
\end{equation}
Under this identification, the determinant function $f$ is locally 
a unit times the determinant of the $(n-r)\times (n-r)$ symmetric matrices.
\end{lemma}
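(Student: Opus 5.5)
The plan is to use the Schur complement. After a change of basis, which is a linear automorphism of $M_n$ preserving every $\Sigma_k$, we may assume
\[
A_0=\begin{pmatrix} P_0 & 0\\ 0 & 0\end{pmatrix},
\]
where $P_0$ is an invertible symmetric $r\times r$ matrix; for instance $P_0=I_r$ by Sylvester/Witt normal form over $\C$. Write a general symmetric matrix in block form
\[
A=\begin{pmatrix} P & Q\\ Q^{T} & R\end{pmatrix},
\]
with $P\in\Sym^2$ of size $r$, $Q$ an $r\times(n-r)$ matrix, and $R$ symmetric of size $n-r$. Let $U=\{\det P\neq 0\}$. This is a Zariski open neighbourhood of $A_0$, since $\det P$ is a polynomial in the coordinates.

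On $U$ define the morphism
\[
\Phi\colon U\to \{P:\det P\ne 0\}\times \C^{r(n-r)}\times \Sym^2(\C^{n-r})^*,\qquad A\mapsto (P,\,Q,\,S),\quad S=R-Q^{T}P^{-1}Q .
\]
Here $S$ is symmetric because $P^{-1}$ is. The map $\Phi$ is an isomorphism of varieties, with inverse $(P,Q,S)\mapsto (P,Q,S+Q^TP^{-1}Q)$, and both directions are regular because $P^{-1}$ has entries regular on $U$. The smooth factor is $\{\det P\neq0\}\times\C^{r(n-r)}$. It has dimension $\frac{r(r+1)}{2}+r(n-r)$, which matches $\dim\Sigma_r$, consistent with $m_\alpha$ in Corollary~\ref{cor:multi-discrepancy}.

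The key identity is the block factorization
\[
A=\begin{pmatrix} I & 0\\ Q^TP^{-1} & I\end{pmatrix}\begin{pmatrix} P & 0\\ 0 & S\end{pmatrix}\begin{pmatrix} I & P^{-1}Q\\ 0 & I\end{pmatrix}.
\]
This is a congruence $A=L\,\mathrm{diag}(P,S)\,L^T$ with $L$ invertible, and all entries of $L$ are regular on $U$. Two consequences follow.
\begin{itemize}
\item Taking determinants gives $f(A)=\det P\cdot\det S$. Since $\det P$ is a unit on $U$, $f$ is a unit times the $(n-r)\times(n-r)$ symmetric determinant.
\item The ranks satisfy $\rank A=r+\rank S$. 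Hence $\Phi(\Sigma_k\cap U)$ equals the smooth factor times $\Sigma^{(n-r)}_{k-r}$ for $r\le k\le n$, which is the stratified isomorphism \eqref{eqs:stratifed-normal-slice}.
\end{itemize}

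The one point needing care, and the main obstacle, is that the identification should hold scheme-theoretically, since the $\Sigma_k$ are defined by the ideals of $(k+1)$-minors. So I will show that on $U$ the ideal $I_{k+1}(A)$ equals $I_{k-r+1}(S)$ extended to the product. Ideals of minors of a given size are invariant under congruence by invertible matrices, by Cauchy–Binet applied in both directions. This gives $I_{k+1}(A)=I_{k+1}(\mathrm{diag}(P,S))$.

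It remains to check $I_{k+1}(\mathrm{diag}(P,S))=I_{k+1-r}(S)$ when $P$ is invertible. For the inclusion $\supseteq$, take $P$ times the $(k+1-r)$-minors of $S$: these are $(k+1)$-minors of $\mathrm{diag}(P,S)$ equal to $\det P$ times a minor of $S$, and $\det P$ is a unit. For $\subseteq$, a Laplace expansion along the $P$-block writes each $(k+1)$-minor of the block diagonal matrix as a product of a minor of $P$ of size $j\le r$ and a minor of $S$ of size $k+1-j\ge k+1-r$. Such a minor of $S$ lies in $I_{k+1-r}(S)$.

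Alternatively, if one only needs the statement for reduced varieties, the rank computation above already suffices, since the rank loci are reduced (they are determinantal varieties of symmetric matrices, which are known to be reduced).
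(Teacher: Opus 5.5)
Your proposal is correct and is essentially the paper's proof: the same Schur-complement congruence to $\mathrm{diag}(P,\,R-Q^{T}P^{-1}Q)$ on the Zariski open set $\{\det P\neq 0\}$, the same change of variables $(P,Q,R)\mapsto(P,Q,S)$, and the same factorization $f=\det P\cdot\det S$ with $\det P$ a unit. Your Cauchy--Binet argument identifying $I_{k+1}(A)$ with $I_{k+1-r}(S)$ on that open set is a useful addition, since the paper leaves the matching of the rank strata under the change of variables, including the scheme-theoretic matching, implicit.
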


\begin{proof}
After changing basis, we can choose an open neighbourhood of \(A_0\) 
on which the leading $r\times r$ blocks are invertible. A symmetric 
matrix there has the form
\[
  A=
  \begin{pmatrix}B&C\\ C^t&D\end{pmatrix},
  \qquad
  B^t=B\in\GL_r,\quad D\in\Sym^2(\C^q)^*.
\]
The congruence transformation by the matrix \(T=
\begin{pmatrix}
I_r&-B^{-1}C\\
0&I_{n-r}
\end{pmatrix}\) gives
\[
T^tAT=
\begin{pmatrix}
B&0\\
0&D-C^tB^{-1}C
\end{pmatrix}
\]
where \(S=D-C^tB^{-1}C\in \Sym^2(\C^{n-r})^*\). The change of variables
\[
  (B,C,D)\longmapsto(B,C,S),\qquad
  S=D-C^tB^{-1}C
\]
yields the isomorphism~\eqref{eqs:stratifed-normal-slice}.

By the transformation \(A\to T^tAT\) we have
\[
  \det(A)=\det(T)^2\det(A)=\det(B)\det(S).
\]
Since \(B\) is invertible, \(\det(B)\) gives a holomorphic unit in 
the analytic coordinates \((B, C, S)\). Thus \(f\) locally is a unit 
times the determinant of \((n-r)\times (n-r)\)-symmetric matrices.
\end{proof}

\begin{remark}
  The successive blow-ups \(\pi_\alpha\) for \(\alpha \le r\) do 
  not affect the smooth part \(\Sigma_r\setminus \Sigma_{r-1}\). At 
  any smooth point of \(\Sigma_r\), Lemma~\ref{lem:normal-slice} 
  identifies the normal slice with \(\Sym^2(\C^{n-r})^*\), and \(f\)
  restricts to the determinant function of degree \(n-r\) multiplied by
  a unit. Hence \(\codim_{M_n}\Sigma_r=\binom{n-r+1}{2}\), and \(f\)
  has order \(n-r\) along the exceptional divisor over \(\Sigma_r\),
  which leads to the multiplicity and log discrepancy of the divisor \(E_r\).
\end{remark}

\section{The topological zeta function}\label{sec:zeta}
The numerical data in Corollary~\ref{cor:multi-discrepancy} imply that
the candidate poles of \(Z^{\mathrm{top}}_f(s)\) have the form 
\(\{-\frac{n-r+1}{2}, 0\le r \le n-1\}\). This is sufficient to conclude
the strong monodromy conjecture of the symmetric determinantal hypersurface 
since the \(b\)-function of \(f\) is known; see Theorem~\ref{thm:b-function}.
The actual poles of \(Z^{\mathrm{top}}_f(s)\) depend on the cancellation
of the coefficients \(\chi(E^\circ_{I})\). In this section,
we compute the Euler characteristics using the theory of wonderful embeddings 
relating to the variety of nonsingular quadrics.

\subsection{Complete quadrics}

\begin{definition}
\label{def:wonderful-embed}
Let \(G\) be a connected algebraic group. Let \(X\) be a smooth, complete 
\(G\)-variety containing a dense open homogeneous \(G\)-subvariety 
\(X_0\). Then \(X\) is a wonderful embedding of \(X_0\) if 
\begin{itemize}
  \item \(X\setminus X_0\) is a union of finitely many \(G\)-stable 
  smooth divisors \(\{X_i\mid 1\leq i\leq k\}\), and the divisors \(X_i\) meet with 
  normal crossings;
  \item all \(G\)-orbits of \(X\) are indexed by subsets of \([k]\):
  for any \(I \subset [k]\), the orbit \(\mathcal{O}_I\) has the form
  \[
    \mathcal{O}_I=\bigcap_{i\in I} X_i-\bigcup_{j\notin I} X_j.
  \]
\end{itemize}
If a wonderful embedding of \(X_0\) exists, then it is unique up to \(G\)-equivariant isomorphism~\cite[Section~2.3]{CanJoyce}.  
\end{definition}
\begin{remark}
\label{rmk:orbits}
Through the definition, the \(G\)-orbits can recover the divisors.  
For any subset \(I\subset [k]\), the closure of the orbit 
\(\mathcal O_I\) is
\[
  \overline{\mathcal O_I}=\bigcup_{I\subset K\subset [k]} \mathcal O_K.
\] 
In particular, we have
\[
X_i  = \bigcup_{i\in K\subset [k]} \mathcal O_K, \quad i\in [k].
\]
\end{remark}

Let \(Q_0\subset \mathbf P(\mathrm{Sym}^2(V^*))\) be the parameter 
space of nonsingular quadric hypersurfaces in \(\mathbf P^{n-1}\cong \mathbf P(V)\).
The group \(\mathrm{SL}_n\) acts on \(Q_0\) by congruence
\[
(g, A)\mapsto gAg^{t}, \quad \forall g\in \mathrm{SL}_n, \quad \forall A\in Q_0
\]
where the stabilizer group \(H\) is the normalizer of \(\mathrm{SO}_n\). 
The variety of complete quadrics \(\CQ(V)\) is the wonderful embedding 
of the symmetric space \(\mathrm{SL}_n/H\cong Q_0\)~\cite[Section~3.A]{ManivelEtAl}. The orbits of
\(\CQ(V)\) have the following geometric characterization. When 
\(I=\emptyset\), we set \(\mathcal O_{I}=Q_0\) as the open dense orbit.
For any subset \(I = \{i_1, \dots, i_{k-1}\}\subset [n-1]\) with
\(i_1 < i_2 < \cdots < i_{k-1}\), the orbit \(\mathcal O_I\) consists 
of all pairs \((F, Q)\) where
\begin{itemize}
\item \(F\) is a partial flag 
   \[
   0=F_0\subset F_1\subset\cdots\subset F_{k-1}\subset F_k=V,\quad
   \dim F_p=i_p, ~1\leq p\leq k-1;
   \]
\item \(Q\) is a collection of nondegenerate projective quadrics
in each quotient \(\mathbf P(F_p/F_{p-1})\).
\end{itemize}
For instance, when \(I=[n-1]\), \(\mathcal O_{I}\) is the complete 
flag variety \(\mathrm{Fl}_{(1,\dots,1)}(V)\) giving the unique
closed orbit in \(\CQ(V)\)~\cite[Section~2.4]{CanJoyce}.

\paragraph{Complete collineations}
In the paper~\cite{VainsencherCompleteQuadrics}, Vainsencher uses
the complete collineation map to construct the complete quadrics.
The complete collineation is the rational map
\[
\begin{aligned}
\mathbf P(\Sym^2 V^*)&\dashrightarrow \prod_{i=2}^{n-1} \mathbf P(\Sym^2(\wedge^i V^*)).\\
[A]&\rightarrow \bigl([\wedge^2A],\ldots,[\wedge^{n-1}A]\bigr).
\end{aligned}
\]
The variety of complete quadrics \(\CQ(V)\) is defined as the graph 
of this rational map.

A matrix \(A\in \Sym^2 V^*\) has \(\mathrm{rank}(A) \le r\) if and 
only if \(\wedge^{r+1} A=0\). Hence the indeterminacy locus of the
\((r+1)\)-th component of the complete collineation map is the 
projective rank locus
\[
    \Sigma_r^{\mathrm{proj}}
    =
    \{[A]\in \mathbf P(\Sym^2 V^*)\mid \rank(A)\le r\}, \quad 1\le r\le n-1.
\]
It is proved in \cite[Theorem 6.3]{VainsencherCompleteQuadrics}
that the graph of the complete collineation map is isomorphic to the
successive blow-ups along the strict transforms of
\(\Sigma_1^{\mathrm{proj}},\Sigma_2^{\mathrm{proj}},\dots,\Sigma_{n-2}^{\mathrm{proj}}.\) This is similar to the affine resolution~\eqref{eqs:log-res}. Let 
\begin{equation}
\label{eqs:proj-res}
  \CQ(V)= Y_{n-1}\xrightarrow{\rho_{n-1}} \cdots \rightarrow Y_2\xrightarrow{\rho_2} Y_1 = \mathbf P(M_n)
\end{equation}
be the projective successive blow-ups. The resulting divisors 
\(D_1, \dots, D_{n-1}\) in \(\CQ(V)\) over the projective rank loci 
\(\Sigma_1^{\mathrm{proj}}, \dots, \Sigma_{n-1}^{\mathrm{proj}}\) are 
the boundary divisors in the sense of Definition~\ref{def:wonderful-embed}.

Let \(L\) be the tautological line bundle \(\mathcal O_{\mathbf P}(-1)\)
on \(\mathbf P(M_n)\). The blow-up \(\Bl_{0} M_n\) of \(M_n\) at the 
origin is isomorphic to the total space \(\Tot(L)\). The affine rank
loci \(\Sigma_r\) are the affine cones over \(\Sigma_r^{\mathrm{proj}}\) 
for \(1\le r\le n\). So the first step strict transform \(E^{(1)}_r\) 
is isomorphic to the total space \(\Tot(L|_{\Sigma_r^{\mathrm{proj}}})\) 
respectively, and \(E^{(1)}_0\) is the zero section of the projection 
\[
  q\colon\Bl_0(M)\cong \Tot(L)\to \mathbf P(M_n).
\]
Such line bundle structures are preserved in the successive blow-ups due 
to the following lemma, which leads to useful relations between the 
divisors \(E_0,E_1\dots, E_{n-1}\) in the affine resolution~\eqref{eqs:log-res} 
and \(\CQ(V), D_1, \dots, D_{n-1}\); see Corollary~\ref{cor:affine-CQ}.

\begin{lemma}
\label{lem:line-bundle}
Let $Z\subseteq Y$ be a closed subscheme and
$\tau\colon\widetilde Y=\Bl_Z Y\to Y$ be the blow-up. Let \(L\) be
a line bundle over \(Y\) and \(q\colon \Tot(L)\to Y\) the total space
projection. There are natural isomorphisms
\begin{equation}
  \Bl_{q^{-1}(Z)}\Tot(L)
  \cong
  \Tot(L)\times_Y\widetilde Y
  \cong
  \Tot(\tau^*L).
  \label{eq:line-bundle-blowup}
\end{equation}
\end{lemma}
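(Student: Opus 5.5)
The second isomorphism is formal: a fibre product of a total space along a base change is the total space of the pulled-back bundle. Concretely, $\Tot(L)=\Spec_Y\Sym^\bullet(L^{\vee})$, and base change along $\tau$ gives $\Spec_{\widetilde Y}\Sym^\bullet(\tau^*L^{\vee})=\Tot(\tau^*L)$. So the real content is the first isomorphism, $\Bl_{q^{-1}(Z)}\Tot(L)\cong \Tot(L)\times_Y\widetilde Y$. I will deduce it from the compatibility of blow-ups with flat base change.

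The key steps, in order:

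\begin{enumerate}
\item Note that $q\colon\Tot(L)\to Y$ is flat, being locally the projection $U\times\mathbf A^1\to U$.

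\item Recall that for $Z\subseteq Y$ with ideal sheaf $\mathcal I$, the blow-up is $\widetilde Y=\operatorname{Proj}_Y\bigoplus_{k\ge0}\mathcal I^k$.

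\item The scheme-theoretic preimage $q^{-1}(Z)$ has ideal sheaf $\mathcal I\cdot\mathcal O_{\Tot(L)}=q^{-1}\mathcal I\cdot\mathcal O_{\Tot(L)}$.

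\item By flatness of $q$, we have $q^*(\mathcal I^k)\cong \mathcal I^k\mathcal O_{\Tot(L)}$ for every $k$. This gives an isomorphism of graded algebras $q^*\bigl(\bigoplus_k\mathcal I^k\bigr)\cong\bigoplus_k(\mathcal I\mathcal O_{\Tot(L)})^k$.

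\item Since relative Proj commutes with base change, we get
\[
\Bl_{q^{-1}(Z)}\Tot(L)=\operatorname{Proj}\bigoplus_k(\mathcal I\mathcal O_{\Tot(L)})^k\cong \Tot(L)\times_Y\operatorname{Proj}\bigoplus_k\mathcal I^k=\Tot(L)\times_Y\widetilde Y .
\]
\end{enumerate}

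Alternatively, and more concretely, one can work locally on an open $U\subseteq Y$ trivializing $L$. There $\Tot(L)|_U\cong U\times\mathbf A^1$, $q^{-1}(Z\cap U)=(Z\cap U)\times\mathbf A^1$, and $\Bl_{Z\times\mathbf A^1}(U\times\mathbf A^1)=\Bl_{Z\cap U}U\times\mathbf A^1$. This holds because the Rees algebra of $\mathcal I[t]$ is the Rees algebra of $\mathcal I$ tensored with $\mathbf C[t]$.

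Naturality is then checked on overlaps. The local identifications are compatible with the transition functions of $L$, since these act only on the $\mathbf A^1$ factor by units pulled back from $U$, and the blow-up of $U$ is unaffected by them. The universal property of blowing up gives a canonical morphism between the two sides in one direction. Indeed, the preimage of $q^{-1}(Z)$ in $\Tot(L)\times_Y\widetilde Y$ is the pullback of the exceptional Cartier divisor of $\widetilde Y$, hence Cartier. So there is a unique map to $\Bl_{q^{-1}(Z)}\Tot(L)$ over $\Tot(L)$, and the local computation shows it is an isomorphism. This avoids any gluing worries.

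The main obstacle, such as it is, is making sure the scheme-theoretic preimage and ideal powers behave as claimed. This is exactly where flatness of $q$ (or the polynomial-ring computation $\mathcal I^k\otimes\mathbf C[t]=(\mathcal I\,\mathbf C[t])^k$) is used. Once that is in place, everything is formal base change.
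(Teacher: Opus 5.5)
Your proposal is correct and follows essentially the same route as the paper. The paper also gets the first isomorphism from flatness of $q$ and the compatibility of blow-ups with flat base change; it cites the Stacks Project for that compatibility, whereas you re-derive it via Rees algebras and relative $\operatorname{Proj}$. The second isomorphism in both comes from base change of the relative $\Spec$ of $\Sym(L^{\vee})$, and your local-trivialization argument is a valid but unnecessary supplement.
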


\begin{proof}
The map $q$ is flat, and blow up commutes with flat base change
\cite[Lemma~31.33.3]{StacksBlowup}. This yields the first isomorphism.  The
second follows from the compatibility of relative spectra with base
change:
\[
  \Spec_Y\!\bigl(\Sym_{\mathcal O_Y}L^\vee\bigr)\times_Y\widetilde Y
  \cong
  \Spec_{\widetilde Y}\!\bigl(
  \Sym_{\mathcal O_{\widetilde Y}}(\tau^*L)^\vee\bigr).
\]
\end{proof}

\begin{corollary}
\label{cor:affine-CQ}
Let \(\rho\colon \CQ(V)\to \mathbf P(M_n)\) be the composition of the 
sequence of blow-ups~\eqref{eqs:proj-res}. Let 
\(L=\mathcal O_{\mathbf P}(-1)\), and let \(X\) be the log resolution
\eqref{eqs:log-res}. There is an isomorphism 
\[
X\cong\Tot(\rho^*L).
\] 
The divisor $E_0$ is a zero section of the projection 
\[
  q\colon X \to \CQ(V),
\]
and 
\begin{equation}
\label{eqs:line-bdl-div-orb}
  E_r \cong q^{-1}(D_r)\cong \Tot(\rho^* L|_{D_r})\qquad(1\leq r\leq n-1).
\end{equation}
\end{corollary}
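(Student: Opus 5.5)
The proof will be by induction along the tower of blow-ups: we compare the affine tower \eqref{eqs:log-res} with the projective tower \eqref{eqs:proj-res} one stage at a time, using Lemma~\ref{lem:line-bundle} at each step.

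\emph{Base case.} The first affine blow-up is $X_1=\Bl_0 M_n\cong\Tot(L)$ with $L=\mathcal O_{\mathbf P}(-1)$ on $Y_1=\mathbf P(M_n)$. Here $E^{(1)}_0$ is the zero section. Since each $\Sigma_r$ for $r\ge1$ is the affine cone over $\Sigma_r^{\mathrm{proj}}$, its strict transform is $E^{(1)}_r=q_1^{-1}(\Sigma_r^{\mathrm{proj}})\cong\Tot(L|_{\Sigma_r^{\mathrm{proj}}})$. The strict transform of $\Sigma_0=\{0\}$ plays no further role as a center, and the later centers are $E^{(\alpha)}_\alpha$ for $\alpha\ge1$.

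\emph{Inductive hypothesis.} For $1\le\alpha\le n-1$ we claim:
\begin{enumerate}
\item $X_\alpha\cong\Tot(L_\alpha)$ with $L_\alpha=(\rho_2\circ\cdots\circ\rho_\alpha)^*L$ over $Y_\alpha$;
\item $E^{(\alpha)}_0$ is the zero section;
\item for $r\ge1$, $E^{(\alpha)}_r=q_\alpha^{-1}(W^{(\alpha)}_r)$, where $W^{(\alpha)}_r\subset Y_\alpha$ is the strict transform of $\Sigma^{\mathrm{proj}}_r$ when $r\ge\alpha$, and the exceptional divisor $D_r$ at that stage when $r<\alpha$.
\end{enumerate}

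\emph{Inductive step.} The next affine center is $E^{(\alpha)}_\alpha=q_\alpha^{-1}(W^{(\alpha)}_\alpha)$. Lemma~\ref{lem:line-bundle} then gives
\[
X_{\alpha+1}=\Bl_{q_\alpha^{-1}(W_\alpha)}\Tot(L_\alpha)\cong\Tot(\rho_{\alpha+1}^*L_\alpha)
\]
over $Y_{\alpha+1}=\Bl_{W_\alpha}Y_\alpha$. Under this isomorphism the exceptional divisor of $X_{\alpha+1}$ is the preimage of the exceptional divisor of $Y_{\alpha+1}$.

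Strict transforms are compatible with the flat base change $q_\alpha$. The reason is that $q_\alpha$ is a line bundle projection, so $q_\alpha^{-1}(W_r)$ is irreducible and not contained in $q_\alpha^{-1}(W_\alpha)$ whenever $W_r\not\subset W_\alpha$. The strict transform of $q_\alpha^{-1}(W_r)$ is then the closure of the preimage of $W_r\setminus W_\alpha$, which equals $q_{\alpha+1}^{-1}$ of the strict transform of $W_r$.

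The zero section $E^{(\alpha)}_0$ is isomorphic to $Y_\alpha$. It meets the center $q_\alpha^{-1}(W_\alpha)$ in the copy of $W_\alpha$ sitting inside the zero section, which is a Cartier divisor on neither side, so this needs a separate argument. Here we use that the blow-up of $Y_\alpha$ along $W_\alpha$ restricted to the zero section is $Y_{\alpha+1}$ itself. Hence the strict transform of the zero section is the zero section of $\Tot(\rho_{\alpha+1}^*L_\alpha)$; equivalently, the zero section of the base-changed bundle is the base change of the zero section.

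\emph{Conclusion.} At $\alpha=n-1$, Vainsencher's identification \cite[Theorem~6.3]{VainsencherCompleteQuadrics} gives $Y_{n-1}=\CQ(V)$. The compositions of the pullbacks are $\rho^*L$, so $X\cong\Tot(\rho^*L)$. This yields $E_r\cong q^{-1}(D_r)\cong\Tot(\rho^*L|_{D_r})$ for $1\le r\le n-1$, where $D_r$ is the final exceptional divisor over $\Sigma_r^{\mathrm{proj}}$. For $r=n-1$, $D_{n-1}$ is the strict transform of the hypersurface $\Sigma_{n-1}^{\mathrm{proj}}$.

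\emph{Main obstacle: index matching.} The affine tower uses centers $\Sigma_1,\dots,\Sigma_{n-2}$ after the blow-up at the origin. The last affine blow-up, along $E^{(n-2)}_{n-2}$, is indexed $\pi_{n-1}$, while the projective tower blows up $\Sigma_1^{\mathrm{proj}},\dots,\Sigma_{n-2}^{\mathrm{proj}}$. We must check that the affine index $\alpha$ corresponds exactly to the projective stage $Y_\alpha$. We must also check that the final step changes nothing for the divisor $D_{n-1}$, since blowing up a Cartier divisor is an isomorphism.

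The base-change claim for strict transforms should be verified carefully. Irreducibility of the $\Sigma_r^{\mathrm{proj}}$ and flatness of $q_\alpha$ are what make it work.
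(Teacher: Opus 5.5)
Your proposal is correct and follows the same route as the paper. You start from the base case $\Bl_0 M_n\cong\Tot(L)$ and apply Lemma~\ref{lem:line-bundle} inductively, matching $X_\alpha$ with $Y_\alpha$, tracking $E^{(\alpha)}_0$ as the zero section and $E^{(\alpha)}_r$ as $q_\alpha^{-1}$ of the corresponding strict transform or exceptional divisor. Your explicit checks supply details the paper only asserts: strict transforms commute with the open flat map $q_\alpha$, and the preimage of the zero section is the zero section of the pulled-back bundle. The worry about a final blow-up of a Cartier divisor is moot, since neither tower blows up $\Sigma_{n-1}$.
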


\begin{proof}
The blow-up of \(M_n\) at the origin is \(\Bl_{0} M_n\cong \Tot(L)\).
For \(1\le r\le n-1\), the strict transform \(E^{(1)}_r\) of the 
affine rank locus \(\Sigma_r\) is isomorphic to the total space 
\(\Tot(L|_{\Sigma_r^{\mathrm{proj}}})\). Apply Lemma~\ref{lem:line-bundle} 
inductively to the blow-ups in~\eqref{eqs:proj-res}. At every step, 
\(q_\alpha\colon X_\alpha\to Y_\alpha\) is the line bundle projection 
of the pullback of \(L\). The divisor \(E^{(\alpha)}_0\) is the zero 
section of \(q_\alpha\), and \(E^{(\alpha)}_r=q_{\alpha}^{-1}(D^{(\alpha)}_r)\) 
where \(D^{(\alpha)}_r\) are the iterated strict transforms of \(\Sigma^{\mathrm{proj}}_{r}\).
\end{proof}

\subsection{Euler characteristics of boundary orbits}

An ordered partition of the integer $n$ is a tuple 
\(\mu=(\mu_1,\dots, \mu_k)\) of positive integers \(\mu_i\) satisfying 
\(\sum_{i=1}^{k} \mu_i=n\). There is a bijection
\[
\left\{\textrm{ordered partitions of}~ n\right\} \leftrightarrow \left\{\textrm{subsets of}~ [n-1]\right\}
\]
which assigns to an ordered partition \(\mu\) the subset
\[
  s_{\mu}=\{s_1, \ldots, s_{k-1}\},\quad s_a=\mu_1+\cdots+\mu_a.
\]
We denote by \(\mathcal O_{\mu}\) the orbit \(\mathcal O_{s_{\mu}}\) 
of the complete quadrics \(\CQ(V)\). 

Note that any \([A]\in \Sigma_r^{\mathrm{proj}}\setminus \Sigma_{r-1}^{\mathrm{proj}}\) 
is naturally associated with a flag \(0=F_0\subset \mathrm{Ker}(A)\cong \C^{n-r}\subset \C^{n}\). 
Hence the divisor \(D_r\) over \(\Sigma_r^{\mathrm{proj}}\) contains 
the orbit \(\mathcal O_{n-r}\) as its open dense subset. Moreover,
we have the relations
\[
  \mathcal O_{s_\mu}=D_{I_\mu}^\circ=\bigcap_{i\in I_\mu}D_{i}
  -\bigcup_{j\notin I_\mu}D_j,\quad I_{\mu}=\{n-s_1, \dots, n-s_{k-1}\}
\]
which is different from the usual index in Definition~\ref{def:wonderful-embed}. 

\begin{proposition}
\label{thm:orbit-euler}
For an ordered partition $\mu=(\mu_1,\ldots,\mu_k)$ of $n$, the Euler
characteristic is
\begin{equation}
  \chi(\mathcal O_\mu)=
  \begin{cases}
    \dfrac{n!}{\mu_1!\cdots\mu_k!},
    &\text{if every }\mu_a\in\{1,2\},\\[3mm]
    0,&\text{otherwise}.
  \end{cases}
  \label{eq:orbit-euler}
\end{equation}
Let \(q(\mu)=\#\{a \colon \mu_a=2\}\). Then in the nonzero case
\[
\chi(\mathcal O_\mu)=\frac{n!}{2^{q(\mu)}}.
\]
\end{proposition}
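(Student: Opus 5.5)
The plan is to exploit the fibration structure of the orbit $\mathcal O_\mu$ over a partial flag variety, together with the multiplicativity of compactly supported Euler characteristics for Zariski-locally trivial fibrations (or torus-fixed-point counting). By the description of the orbits of $\CQ(V)$ recalled above, sending $(F,Q)$ to its flag $F$ gives a surjective $\mathrm{SL}_n$-equivariant morphism
\[
\mathcal O_\mu\longrightarrow \Fl_\mu(V),\qquad \dim F_p=\mu_1+\cdots+\mu_p .
\]
Its fibre over $F$ is $\prod_{a=1}^k Q_0(F_a/F_{a-1})$, where $Q_0(W)$ denotes the space of nondegenerate projective quadrics in $\mathbf P(W)$. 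Since the map is equivariant and $\Fl_\mu(V)$ is homogeneous, it is locally trivial in the étale topology. To get genuine multiplicativity I would use that it is Zariski-locally trivial: over the big cells of $\Fl_\mu(V)$ the quotients $F_a/F_{a-1}$ are trivialised. Hence
\[
\chi(\mathcal O_\mu)=\chi(\Fl_\mu(V))\prod_{a=1}^k\chi\bigl(Q_0(\C^{\mu_a})\bigr)=\frac{n!}{\mu_1!\cdots\mu_k!}\prod_{a}\chi\bigl(Q_0(\C^{\mu_a})\bigr).
\]
Here $\chi(\Fl_\mu)$ is the number of torus-fixed points, equivalently of Schubert cells.

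The main step is to compute $c_m:=\chi(Q_0(\C^m))$, the Euler characteristic of the complement of the determinant hypersurface in $\mathbf P(\Sym^2\C^m)$. I will show that $c_1=1$, $c_2=1$, and $c_m=0$ for $m\ge3$.

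For $m=1$, $Q_0(\C^1)$ is a point. For $m=2$, $Q_0(\C^2)=\mathbf P^2$ minus a smooth conic, so $c_2=3-2=1$.

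For $m\ge3$, the quickest argument uses the action of the maximal torus $T\subset \mathrm{SL}_m$: the Euler characteristic equals that of the $T$-fixed locus. Under congruence, $T$ acts on the coordinate $x_{ij}$ with weight $t_it_j$, so a fixed point of $\mathbf P(\Sym^2)$ must lie in a single weight space. The weight spaces are one-dimensional, since the weights $t_it_j$ for $i\le j$ are pairwise distinct. Hence the fixed points are the coordinate points $[e_{ij}]$, and such a point is nondegenerate only when $m=1$ (the point $x_{11}$) or $m=2$ (the point $x_{12}$). This recovers $c_1=c_2=1$ and gives $c_m=0$ for $m\ge3$, since no single-monomial symmetric matrix of size at least $3$ is invertible. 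I would apply this torus-fixed-point principle directly to $\mathcal O_\mu$ as a cross-check.

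Substituting these values, the product vanishes unless every $\mu_a\in\{1,2\}$. In that case it equals $n!/\prod\mu_a!=n!/2^{q(\mu)}$, which is the claim.

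The obstacle I expect is justifying multiplicativity of $\chi$, that is, the local triviality of the fibration. I would handle it by Zariski-local triviality over the Schubert big cells, or avoid it altogether by arguing with $T$-fixed points on $\mathcal O_\mu$: a fixed pair $(F,Q)$ consists of a coordinate flag together with fixed quadrics in the graded pieces, which again forces each block to have size $1$ or $2$ with a unique fixed quadric.
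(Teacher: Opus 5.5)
Your proposal is correct and follows the paper's proof essentially step for step. Both use the fibration $\prod_a U_{\mu_a}\to\mathcal O_\mu\to\Fl_\mu(V)$, the Schubert-cell count $n!/\prod_a\mu_a!$, the direct values $\chi(U_1)=\chi(U_2)=1$, and the vanishing of $\chi(U_d)$ for $d\ge 3$ via a torus action whose fixed points in $\PP(\Sym^2)$ are coordinate monomials of rank at most two. The differences are minor: you use the full maximal torus rather than a generic one-parameter subgroup, you justify Zariski-local triviality over the big cells (the paper only asserts local triviality), and your direct $T$-fixed-point count on $\mathcal O_\mu$ is a valid alternative that avoids the multiplicativity step altogether.
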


\begin{proof}
Recall the orbit \(\mathcal O_{\mu}\) parametrizes all partial flags
\begin{equation}
\label{eqs:flag-index}
   0=F_0\subset F_1\subset\cdots\subset F_{k-1}\subset F_k=V,\quad
   \dim F_a/F_{a-1}=\mu_a, ~1\leq a\leq k;
\end{equation}
and nondegenerate projective quadrics in \(\mathbf P(F_a/F_{a-1})\). 
Denote by \(\Fl_\mu(V)\) the flag variety of type~\eqref{eqs:flag-index}, 
and \(U_{\mu_a}\subset \mathbf P\bigl(\Sym^2(\C^{\mu_a})^*\bigr)\)
the space of nondegenerate projective quadratic forms of rank \(\mu_a\).
There is a locally trivial fibration
\begin{equation}
\label{eq:orbit-fibration}
  \prod_{a=1}^kU_{\mu_a}\longrightarrow \mathcal O_\mu
  \longrightarrow\Fl_\mu(V).
\end{equation}
Therefore we have
\[
\chi(\mathcal O_\mu) = \chi(\operatorname{Fl}_\mu(V)) \cdot \prod_{a=1}^k\chi(U_{\mu_a}).
\]
The Schubert-cell decomposition gives~\cite[Section~1.2]{BrionFlag} 
\[
  \chi\bigl(\Fl_\mu(V)\bigr)=
  \frac{n!}{\mu_1!\cdots\mu_k!}.
\]
Multiplying this by the Euler characteristics of \(U_{\mu_a}\) given 
in Lemma~\ref{lem:chi-U} proves the assertion.
\end{proof}

\begin{lemma}\label{lem:chi-U}
Let \(U_d\subset \mathbf P\bigl(\Sym^2(\C^{d})^*\bigr)\) be the space 
of nonsingular projective quadratic forms on \(\C^d\). We have
\[
  \chi(U_d)=
  \begin{cases}
    1,&d=1,2,\\
    0,&d\geq3.
  \end{cases}
\]
\end{lemma}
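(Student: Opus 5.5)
The plan is to compute $\chi(U_d)$ by stratifying the whole projective space $\mathbf P\bigl(\Sym^2(\C^d)^*\bigr)$ by rank, and then inducting on $d$.

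The cases $d=1,2$ are direct. For $d=1$, $\mathbf P(\Sym^2(\C^1)^*)$ is a single point, and it is nonsingular, so $\chi(U_1)=1$. For $d=2$, the space is $\mathbf P^2$ and the singular quadrics form the conic $\{ac-b^2=0\}\cong\mathbf P^1$, so $\chi(U_2)=3-2=1$.

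For general $d$, write $P_d=\mathbf P\bigl(\Sym^2(\C^d)^*\bigr)\cong\mathbf P^{\binom{d+1}{2}-1}$. Then
\[
\chi(P_d)=\binom{d+1}{2}
\]
and
\[
P_d=\bigsqcup_{r=1}^{d}\bigl(\Sigma^{\mathrm{proj}}_r\setminus\Sigma^{\mathrm{proj}}_{r-1}\bigr).
\]
The rank-$r$ stratum fibres over the Grassmannian $\mathrm{Gr}(d-r,d)$ by sending a form to its kernel. The fibre over a kernel $K$ is the space of nondegenerate projective quadrics on $\C^d/K$, which is $U_r$. This is the same mechanism as the fibration \eqref{eq:orbit-fibration}, and the fibration is Zariski locally trivial because it is $\GL_d$-equivariant over a homogeneous base. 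Euler characteristics are multiplicative in such fibrations, so
\[
\binom{d+1}{2}=\sum_{r=1}^{d}\binom{d}{r}\chi(U_r).
\]
The identity $\binom{d+1}{2}=d\cdot 1+\binom{d}{2}\cdot 1$ holds for all $d\ge 1$, so the values $\chi(U_1)=\chi(U_2)=1$ and $\chi(U_r)=0$ for $r\ge3$ satisfy this system. The system is triangular with leading coefficient $\binom{d}{d}=1$, so strong induction on $d$ gives uniqueness and forces $\chi(U_d)=0$ for every $d\ge3$.

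As a cross-check, one can instead use a torus action. The diagonal torus $T\subset\GL_d$ acts on $U_d$, and $\chi(U_d)=\chi(U_d^{T'})$ for a generic one-parameter subgroup $T'$. A fixed point is a coordinate monomial $[x_ix_j]$, and such a point is nondegenerate only when $d=1$ ($x_1^2$) or $d=2$ ($x_1x_2$). This gives the same answer.

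The only point needing care is the local triviality of the kernel fibration and the multiplicativity of compactly supported Euler characteristics. Both are standard, since these are algebraic fibrations that are locally trivial in the Zariski topology.
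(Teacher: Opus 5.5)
Your proof is correct, but your main argument differs from the paper's. The paper handles $d=1,2$ directly, as you do. For $d\ge 3$ it uses only what you list as a cross-check: a $\mathbf G_m$-action with generic weights, whose fixed points in $\mathbf P(\Sym^2(\C^d)^*)$ are the coordinate monomials. These all have rank at most two, so $U_d^{\mathbf G_m}=\varnothing$, and $\chi_c(U_d)=0$ follows from the Bia{\l}ynicki-Birula--Hu localization theorem.

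Your primary route instead stratifies the ambient projective space by rank. The kernel map to $\mathrm{Gr}(d-r,d)$ has fibre $U_r$, which is the same associated-bundle mechanism as \eqref{eq:orbit-fibration}. You then solve the unitriangular system $\binom{d+1}{2}=\sum_{r=1}^{d}\binom{d}{r}\chi(U_r)$. The arithmetic $\binom{d+1}{2}-d-\binom{d}{2}=0$ is right, and the system recovers $\chi(U_1)=\chi(U_2)=1$ by itself, so your direct base-case checks are only consistency checks. For multiplicativity you do not even need Zariski local triviality: constancy of $\chi_c$ on the fibres suffices.

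What your approach buys is elementarity. It needs only additivity of $\chi_c$, multiplicativity over fibrations, $\chi(\mathbf P^N)=N+1$, and $\chi(\mathrm{Gr}(k,d))=\binom{d}{k}$. It makes no appeal to a localization theorem on a non-complete variety, where one has to be careful to work with $\chi_c$ and use that free $\mathbf G_m$-orbits contribute zero.

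The paper's approach is shorter and self-contained in $d$: it computes $\chi(U_d)$ without knowing the smaller $U_r$. It also explains conceptually why the vanishing holds, namely that no torus-fixed form is nondegenerate once $d\ge 3$. That kind of argument transfers directly to other spaces with fixed-point-free torus actions, whereas yours depends on having the full rank stratification and the Euler characteristic of its closure.
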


\begin{proof}
For $d=1$, the space $U_1$ is a point. For $d=2$, $U_2=\PP^2\setminus \Delta$ 
where \(\Delta\) is the discriminant conic $4bc-a^2=0$.  
Hence $\chi(U_2)=\chi(\PP^2)-\chi(\Delta)=1$.

Suppose $d\geq3$. Let \((e_1,\dots, e_d)\) be coordinates of \(\C^d\).
Consider a \(\mathbf G_m\)-action
\[
\mathbf G_m\times \C^d\to\C^d, \quad t\cdot e_i=t^{w_i}\cdot e_i
\]
with prescribed weights \((w_1,\dots, w_d)\). We may choose suitable 
\(w_i\) such that the weights \(-(w_i+w_j)\) on the coordinate monomials 
\(e^*_ie^*_j\) of \(\mathrm{Sym}^2 (\C^d)^*\) are pairwise distinct. 
Thus the fixed points in \(\PP(\mathrm{Sym}^2 (\C^d)^*)\) are exactly 
the coordinate monomials, which represent forms of rank at most two, 
so $U_d$ has no fixed point under the \(\mathbf G_m\)-action. 

Bia{\l}ynicki--Birula~\cite{BialynickiBirula} studied the fixed point 
formula of complete varieties with a \(\mathbf G_m\)-action and additive 
invariants. Hu extended their study and proved that if a reduced and 
irreducible algebraic variety \(X\) admits a \(\mathbf G_m\)-action, then
\(X\) is \(\mathbf G_m\)-equivalent to the fixed point locus 
\(X^{\mathbf{G}_m}\) in the sense of~\cite[Section~2]{Hu}.
As a consequence, 
\[
\chi_c(X)=\chi_c(X^{\mathbf{G}_m}).
\]
Therefore, in our cases,
\[
  \chi_c(U_d)=\chi_c(U_d^{\mathbf G_m})=0, \quad d\geq 3.
\]
\end{proof}

\subsection{Recurrence formula}

\begin{theorem}
\label{thm:local-global-zeta-function}
Let $\cC_{1,2}(n)$ denote the set of partitions of $n$ all of whose
parts are $1$ or $2$. For $n\geq 1$, the local and global topological 
zeta functions of the determinant function \(f\) in ~\eqref{eq:symmetric-determinant} are
\[
\begin{aligned}
  Z^{\mathrm{top}}_{f,0}(s) &=
  \sum_{\mu\in\cC_{1,2}(n)}
  \frac{n!}{2^{q(\mu)}}
  \prod_{a=1}^{\ell(\mu)}
  \frac{1}{s_a\left(s+\dfrac{s_a+1}{2}\right)}\\
  Z^{\mathrm{top}}_f(s) &= Z^{\mathrm{top}}_{f,0}(s)
\end{aligned}
\]
\end{theorem}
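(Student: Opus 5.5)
The plan is to evaluate the defining sum~\eqref{eqs:top-zeta-fun} on the resolution $\pi\colon X\to M_n$ of Theorem~\ref{thm:inductive-blowup}, using the line bundle description of Corollary~\ref{cor:affine-CQ} to turn every stratum $E_I^\circ$ into something built from a boundary orbit of $\CQ(V)$.

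\textbf{Step 1: numerical data.} By Corollary~\ref{cor:multi-discrepancy}, $E_r$ has $N_r=n-r$ and $\nu_r=\binom{n-r+1}{2}$. Writing $m=n-r$, each factor in~\eqref{eqs:top-zeta-fun} is
\[
\nu_r+N_rs=\frac{m(m+1)}{2}+ms=m\left(s+\frac{m+1}{2}\right),
\]
which is exactly the factor appearing in the statement with $m=s_a$.

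\textbf{Step 2: only strata through $E_0$ contribute.} Take $I\subseteq\{0,\dots,n-1\}$ with $0\notin I$. Set $J=I\setminus\{0\}$ in general and $D_J^\circ=\bigcap_{j\in J}D_j\setminus\bigcup_{j\notin J}D_j$, with $D_\emptyset^\circ=Q_0$. By~\eqref{eqs:line-bdl-div-orb}, $E_r=q^{-1}(D_r)$ for $r\ge1$, and $E_0$ is the zero section. Hence
\[
E_I^\circ=q^{-1}(D_I^\circ)\setminus E_0 .
\]
This is a $\C^*$-bundle over $D_I^\circ$, locally trivial in the Zariski topology since it is the complement of the zero section of a line bundle. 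For $I=\emptyset$ it is simply $M_n\setminus D$, a $\C^*$-bundle over $Q_0$. By multiplicativity of $\chi$ in such fibrations, $\chi(E_I^\circ)=\chi(\C^*)\chi(D_I^\circ)=0$.

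For $I=\{0\}\cup J$, the zero section identifies $E_0\cong\CQ(V)$, and under this identification $E_0\cap E_r$ is the zero section over $D_r$. Therefore $E_I^\circ\cong D_J^\circ$, which is a single $\mathrm{SL}_n$-orbit of $\CQ(V)$.

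\textbf{Step 3: local equals global.} Since $\pi^{-1}(0)$ is the zero section $E_0$, every surviving stratum already lies in $\pi^{-1}(0)$. Hence $\chi(E_I^\circ\cap\pi^{-1}(0))=\chi(E_I^\circ)$ for all $I$, and $Z^{\mathrm{top}}_{f,0}=Z^{\mathrm{top}}_f$.

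\textbf{Step 4: summing over orbits.} Reindex the orbits $D_J^\circ$ by ordered partitions $\mu$ of $n$ via the stated relation $\mathcal O_\mu=D^\circ_{I_\mu}$, where $I_\mu=\{n-s_1,\dots,n-s_{k-1}\}$. The element $E_0$ has $n-0=n=s_k$. So the index set $\{0\}\cup I_\mu$ corresponds exactly to the partial sums $s_1,\dots,s_k$, with $k=\ell(\mu)$ and $m=s_a$. By Step 1 the product of factors becomes $\prod_a\bigl(s_a(s+\tfrac{s_a+1}{2})\bigr)^{-1}$.

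Proposition~\ref{thm:orbit-euler} then gives $\chi(\mathcal O_\mu)=n!/2^{q(\mu)}$ when all parts of $\mu$ lie in $\{1,2\}$, and $0$ otherwise. This restricts the sum to $\cC_{1,2}(n)$. Here $\cC_{1,2}(n)$ must be read as \emph{ordered} partitions (compositions), since $\mathcal O_\mu$ depends on the order of the parts.

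\textbf{Main obstacle.} The delicate point is Step 2's identification of $E_0\cap E_r$ with the zero section over $D_r$, compatibly with the simple normal crossing structure, so that $E_I^\circ\cong D_J^\circ$ on the nose. I would prove this by induction along the blow-ups using Lemma~\ref{lem:line-bundle}, checking at each stage that the center $q_\alpha^{-1}(D_\alpha^{(\alpha)})$ meets the zero section exactly in the zero section over $D^{(\alpha)}_\alpha$.

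A secondary bookkeeping point is matching the index conventions, namely $D_r\leftrightarrow$ kernel dimension $n-r$ and $N=n-r=s_a$, so that each partial sum $s_a$ produces the correct factor.
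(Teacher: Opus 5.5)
Your proposal is correct and follows the paper's proof essentially step for step. The structure $X\cong\Tot(\rho^*L)$ makes every stratum with $0\notin I$ a $\C^*$-bundle over $D_I^\circ$, hence $\chi=0$. It identifies each stratum with $0\in I$ with a boundary orbit $D^\circ_{I\setminus\{0\}}=\mathcal O_\mu$ of $\CQ(V)$ lying in $\pi^{-1}(0)=E_0$, which gives local $=$ global. Proposition~\ref{thm:orbit-euler} together with $(N_r,\nu_r)=(n-r,\binom{n-r+1}{2})$ then yields the formula.

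Your reading of $\cC_{1,2}(n)$ as ordered partitions (compositions) is the right one; the orbit indexing and the paper's recurrence via the last part both require it. The point you flag as the main obstacle is in fact immediate from $E_r=q^{-1}(D_r)$ in Corollary~\ref{cor:affine-CQ}, since $E_0\cap E_r$ is then just the zero section restricted over $D_r$.
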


\begin{proof}
The local Denef--Loeser topological zeta function is
\[
Z^{\mathrm{top}}_{f,0}(s) \coloneqq \sum_{I\subset [n]} \chi(E^{\circ}_I\cap \pi^{-1}(0)) \prod_{i\in I}\frac{1}{\nu_i+sN_i}. 
\]
By Corollary~\ref{cor:affine-CQ}, the fiber $\pi^{-1}(0)=E_0$ is the 
zero section of the projection $q\colon X\to \CQ(V)$. Then the line 
bundle structure~\eqref{eqs:line-bdl-div-orb} implies that 
\[
  E_I^\circ\cap\pi^{-1}(0)=
  \begin{cases}
    D_{I\setminus\{0\}}^\circ,&0\in I\subset [n],\\
    \varnothing,&0\notin I\subset [n].
  \end{cases}
\]
Let \(\mu\) be the ordered partition of \(n\) associated to 
\(I_\mu=I\setminus\{0\}\). Then we have
\[
\chi(D^{\circ}_{I\setminus\{0\}})=
\begin{cases}
\dfrac{n!}{2^{q(\mu)}},
&\text{if all parts of } \mu \text{ are }1\text{ or }2,\\[2mm]
0,&\text{otherwise}.
\end{cases}
\]
Therefore the local zeta function is reduced to 
\[
Z^{\mathrm{top}}_{f,0}(s) \coloneqq \sum_{\mu\in\cC_{1,2}(n)} \dfrac{n!}{2^{q(\mu)}} \prod_{r\in \{0\}\cup I_{\mu}}\frac{1}{\nu_r+sN_r}.
\]
Recall the numerical data \((N_r, \nu_r) = (n-r, \frac{(n+1-r)(n-r)}{2})\).
Write $r=n-s_a\in I_{\mu}$ and \(0=n-s_k\) where \(k=\ell(\mu)\) is
the length of \(\mu\). Then the denominator
\[
  \nu_{n-s_a}+sN_{n-s_a}
  =
  s_a\left(s+\frac{s_a+1}{2}\right), \quad 1\le a\le \ell(\mu),
\]
yields the formula
\begin{equation}
\label{eqs:local-zeta-fun}
Z^{\mathrm{top}}_{f,0}(s)
=
\sum_{\mu\in\mathcal C_{1,2}(n)}
\frac{n!}{2^{q(\mu)}}
\prod_{a=1}^{\ell(\mu)}
\frac1{
s_a
\left(s+\dfrac{s_a+1}{2}\right)
}.
\end{equation}

The global topological zeta function is
\[
Z^{\mathrm{top}}_f(s) \coloneqq \sum_{I\subset [n]} \chi(E^{\circ}_I) \prod_{i\in I}\frac{1}{\nu_i+sN_i}.   
\]
It follows from Corollary~\ref{cor:affine-CQ} that \(E_I^\circ\) is a 
\(\mathbf C^*\)-bundle over \(D^{\circ}_I\) if \(0\notin I\), and 
\[
\chi(E_I^\circ)
=
\chi(\mathbf C^*)\chi(D_I^\circ)=0.
\]
The remaining sum over \(0\in I\) yields the same formula~\eqref{eqs:local-zeta-fun}.
\end{proof}

Write $Z_n(s)=Z^{\mathrm{top}}_{f,0}(s)$, with formal initial inputs
\(Z_0(s)=1, \,Z_1(s)=\frac{1}{s+1}\). By discussing the last term
of an ordered partition, we can obtain the recurrence
\begin{equation}
\label{eq:recurrence}
  Z_n(s)=
  \frac{Z_{n-1}(s)+\dfrac{n-1}{2}Z_{n-2}(s)}
  {s+\dfrac{n+1}{2}}
  \qquad(n\geq2).
\end{equation}
Indeed, if $\nu$ is a partition of $n-1$ and \(\mu\) is the partition 
of \(n\) obtained by adding $1$ to \(\nu\) as the last term, we have \(q(\mu)=q(\nu)\) and 
\(\ell(\mu)=\ell(\nu)+1\). It follows that
\[
\frac{n!}{2^{q(\mu)}}\prod_{a=1}^{\ell(\mu)}
\frac{1}{s_a\left(s+\dfrac{s_a+1}{2}\right)}=
\frac{n}{n(s+\frac{n+1}{2})}\cdot \frac{(n-1)!}{2^{q(\nu)}}
\prod_{a=1}^{\ell(\nu)}
\frac1{
s_a
\left(s+\dfrac{s_a+1}{2}\right)
}.
\]
If $\nu$ is a partition of $n-2$ and \(\mu\) is the partition 
of \(n\) obtained by adding $2$ to \(\nu\) as the last term, we have 
\(q(\mu)=q(\nu)+1\) and \(\ell(\mu)=\ell(\nu)+1\). We have
\[
 \frac{n!}{2^{q(\mu)}}\prod_{a=1}^{\ell(\mu)}
\frac{1}{s_a\left(s+\dfrac{s_a+1}{2}\right)}=
\frac{n(n-1)}{2n(s+\frac{n+1}{2})}\cdot \frac{(n-2)!}{2^{q(\nu)}}
\prod_{a=1}^{\ell(\nu)}
\frac1{
s_a
\left(s+\dfrac{s_a+1}{2}\right)
}.
\]
Hence, the sum over the partitions of \(n-1\) has the common factor 
\(\frac{1}{s+\frac{n+1}{2}}\), and the sum over the partitions of 
\(n-2\) has the common factor \(\frac{n-1}{2(s+\frac{n+1}{2})}\).
Summing these two classes yields~\eqref{eq:recurrence}.

\begin{theorem}
\label{thm:closed-form}
The expressions~\eqref{eq:Fodd} and~\eqref{eq:Feven}
\[
  F_{2m+1}(s)=\frac{(2m+1)!!}{(s+1)\prod_{j=1}^{m}(2s+2j+1)},\quad                
  F_{2m}(s)=\frac{(2m-1)!!(s+2m+1)}{(s+1)\prod_{j=1}^{m}(2s+2j+1)}
\]
are the solutions of the recurrence~\eqref{eq:recurrence} w.r.t the 
parity. Hence
\begin{equation}
\label{eq:closed-closed}
  Z_n(s)=F_n(s),
\end{equation}
and the pole set is exactly the one in \eqref{eq:actual-poles} with \(r=0\).
\end{theorem}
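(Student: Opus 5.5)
Since the recurrence~\eqref{eq:recurrence} together with the initial values \(Z_0=1\), \(Z_1=\frac{1}{s+1}\) determines \(Z_n\) uniquely, the plan is to prove by strong induction on \(n\) that \(F_n\) satisfies the same recurrence with the same initial data. We first check the base cases: \(F_1(s)=\frac{1}{s+1}=Z_1\) (the case \(m=0\) of~\eqref{eq:Fodd}, where the product is empty). The formula~\eqref{eq:Feven} does not directly cover \(n=0\), so we verify \(n=2\) separately. From~\eqref{eq:recurrence}, \(Z_2=\frac{Z_1+\frac12 Z_0}{s+\frac32}=\frac{2(1+\frac{s+1}{2})}{(s+1)(2s+3)}=\frac{s+3}{(s+1)(2s+3)}\), which equals \(F_2\).

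For the inductive step we multiply~\eqref{eq:recurrence} by \(2\) and rewrite it as \((2s+n+1)Z_n=2Z_{n-1}+(n-1)Z_{n-2}\). Set \(P_m(s)=(s+1)\prod_{j=1}^m(2s+2j+1)\), so that \(F_{2m+1}=(2m+1)!!/P_m\) and \(F_{2m}=(2m-1)!!(s+2m+1)/P_m\). The induction splits by parity.

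\emph{Odd case, \(n=2m+1\).} Here \(2s+n+1=2s+2m+2\), and \(F_{2m}\) and \(F_{2m-1}\) have denominators \(P_m\) and \(P_{m-1}=P_m/(2s+2m+1)\). The right-hand side becomes
\[
\frac{(2m-1)!!\bigl(2(s+2m+1)+2m(2s+2m+1)\bigr)}{P_m}.
\]
We then check the polynomial identity \(2(s+2m+1)+2m(2s+2m+1)=(2m+1)(2s+2m+2)\); both sides equal \((4m+2)s+4m^2+6m+2\). This gives \(F_{2m+1}\).

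\emph{Even case, \(n=2m\).} Here \(2s+n+1=2s+2m+1\), and the right-hand side is
\[
\frac{2(2m-1)!!+(2m-1)(2m-3)!!(s+2m-1)}{P_{m-1}}=\frac{(2m-1)!!(s+2m+1)}{P_{m-1}}.
\]
Dividing by \(2s+2m+1\) converts \(P_{m-1}\) into \(P_m\), which yields \(F_{2m}\).

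Finally, for the pole set we read off the reduced forms. Every root of the denominator is simple, namely \(-1\) and \(-\frac{2j+1}{2}\) for \(1\le j\le\lfloor n/2\rfloor\). The only possible cancellation comes from the numerator factor \(s+2m+1\) in the even case, whose root \(-(2m+1)\) is not among these values since \(2m+1>\frac{2m+1}{2}\) and \(2m+1>1\). So the poles are exactly~\eqref{eq:actual-poles} with \(r=0\), and all are simple. The only mildly delicate step is the bookkeeping of the double factorials and the \(n=0\) versus \(n=2\) base cases; everything else is routine algebra.
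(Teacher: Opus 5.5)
Your proof is correct and follows essentially the paper's route: check $n=1$ and $n=2$ directly, then verify that $F_{2m+1}$ and $F_{2m}$ satisfy the recurrence~\eqref{eq:recurrence} separately by parity. Your explicit polynomial identities supply the algebra the paper leaves to the reader, and your check that the numerator root $-(2m+1)$ never cancels a denominator factor usefully justifies the exact pole set and the simplicity of the poles, which the paper states without comment.
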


\begin{proof}
The assertion holds for $n=1$, and \eqref{eq:recurrence} gives
\[
  Z_2(s)=\frac{s+3}{(s+1)(2s+3)}=F_2(s).
\]
Then we can verify the expressions \(F_{2m+1}(s)\) and \(F_{2m}(s)\)
satisfy the following recurrences
\[
  F_{2m+1}=\frac{F_{2m}+mF_{2m-1}}{s+m+1}, \quad 
  F_{2m+2}=\frac{F_{2m+1}+\frac{2m+1}{2}F_{2m}}{s+\frac{2m+3}{2}}, \quad m\geq1
\]
\end{proof}

\begin{remark}
Let $x\in D$ have rank $r$. Lemma~\ref{lem:normal-slice} identifies 
the holomorphic germ of $f$ at $x$ with the determinant on
$\Sym^2(\C^{n-r})^*$ at the origin multiplying with a unit. 
Theorem \ref{thm:closed-form} therefore gives
\begin{equation}
\label{eq:local-zeta-closed}
Z^{\mathrm{top}}_{f,x}(s)=F_{n-r}(s).
\end{equation}
\end{remark}

\subsection{b-functions of prehomogeneous vector spaces}
\label{sec:b-function}
Let $(G,\rho,W)$ be an irreducible regular prehomogeneous vector space
over $\C$, with $G$ connected and reductive. $W$ has a dense $G$-orbit 
whose complement is the zeros of the relative invariant $F$ of $(G,\rho,W)$,
where $F(\rho(g)w)=\chi(g)F(w)$ for a character $\chi\colon G\to \C^*$. 
The dual representation $\rho^*$ on $W^*$ is again regular prehomogeneous 
and has a relative invariant $F^*$ with character $\chi^{-1}$. The 
\(b\)-function of $(G,\rho,W)$ is a monic polynomial \(b_F(s)\) with
$\deg b_F=\deg F$ that satisfies
\[
  F^*(\partial_1, \ldots, \partial_n)F^{s+1}=b_F(s)F^s;
\]
see~\cite[Section~1]{KimuraSatoZhu}. This coincides with the 
Bernstein-Sato polynomial of the relative invariant \(F\).

For $G=\GL(V)$ and $W=\Sym^2(V^*)$, take $\rho(g)A=gAg^{t}$. The dual 
action on $\Sym^2(V)$ is $\rho^*(g)B=g^{-t}Bg^{-1}$, and both relative 
invariants are determinants. Kimura computed the associated \(b\)-function
\cite[Section~2-2]{Kimura}.

\begin{theorem}
\label{thm:b-function}
For the determinant function \(f\) on $\Sym^2(\C^n)^*$, the 
Bernstein--Sato polynomial of \(f\) is
\begin{equation}
  \label{eq:global-b}
  b_f(s)=\prod_{j=1}^{n}\left(s+\frac{j+1}{2}\right).
\end{equation}
Suppose that \(x\in D\) is a point of rank \(r\). The local Bernstein-Sato
polynomial is
\begin{equation}
  \label{eqs:local-b-fun}
  b_{f,x}(s)=\prod_{j=1}^{n-r}(s+\frac{j+1}{2}).
\end{equation}
\end{theorem}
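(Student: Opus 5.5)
For the global $b$-function \eqref{eq:global-b} I will not redo the computation. I will cite Kimura's result for the prehomogeneous space $(\GL(V),\Sym^2(V^*))$ \cite[Section~2-2]{Kimura}, together with the identification recalled above of the prehomogeneous $b$-function with the Bernstein--Sato polynomial of the relative invariant.

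If a self-contained check of \eqref{eq:global-b} is wanted, I would use the Cayley-type identity
\[
\det(\partial)\det(x)^{s+1}=\prod_{j=1}^{n}\Bigl(s+\tfrac{j+1}{2}\Bigr)\det(x)^s,
\]
where $\partial$ is the symmetric matrix of derivatives, with off-diagonal entries $\tfrac12\partial/\partial x_{ij}$. This identity exhibits $b_f$ as a divisor of the right-hand side. The reverse divisibility follows from the general fact that, for a reduced irreducible regular prehomogeneous space, the $b$-function of the relative invariant equals $b_f$ and has degree $\deg f=n$.

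For the local statement \eqref{eqs:local-b-fun} at a point $x$ of rank $r$, I will use Lemma~\ref{lem:normal-slice}. On a neighbourhood of $x$ there are analytic coordinates $(B,C,S)$ in which $f=\det(B)\cdot\det(S)$, where $\det(B)$ is a unit and $S$ ranges over $\Sym^2(\C^{n-r})^*$. The point $x$ corresponds to $S=0$.

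Two standard facts about local $b$-functions then finish the argument:
\begin{enumerate}[label=\textup{(\alph*)}]
\item $b_{uf,x}=b_{f,x}$ for a holomorphic unit $u$.
\item $b_{g\otimes 1,(y,z)}=b_{g,y}$ when the function is independent of the smooth factor.
\end{enumerate}
Together these give $b_{f,x}=b_{\det_{n-r},0}$. Since $\det_{n-r}$ is homogeneous and $\C^*$-equivariant, its local $b$-function at the origin equals its global one. Applying \eqref{eq:global-b} with $n$ replaced by $n-r$ then yields \eqref{eqs:local-b-fun}.

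The main obstacle is fact (a), since multiplying by a unit is not obviously harmless for $b$-functions. My first approach would be to absorb the unit into the coordinates. Here $\det B$ has a local square root, and the change of variables $S\mapsto S'=(\det B)^{1/(n-r)}S$ makes $f=\det(S')$ exactly, because $\det(\lambda S)=\lambda^{n-r}\det S$. The substitution $(B,C,S)\mapsto(B,C,S')$ is still an analytic coordinate change near $x$, so no genuine unit remains and only fact (b) is needed. Fact (b) is immediate from the definition, because differential operators in the extra variables play no role.
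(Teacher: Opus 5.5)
Your proposal is correct and follows the paper's proof. The global formula is cited from Kimura, and the local one is reduced via Lemma~\ref{lem:normal-slice} to the determinant on $\Sym^2(\C^{n-r})^*$ at the origin, where homogeneity makes the local and global $b$-functions agree. Your absorption of the unit $\det B$ into $S$ uses a local $(n-r)$-th root, which is what the argument actually needs, not the square root you mention. This explicitly justifies the unit-invariance $b_{uf,x}=b_{f,x}$ that the paper uses without comment.
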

\begin{proof}
The global formula is cited above. By Lemma~\ref{lem:normal-slice}, 
locally $f=u\det(S)$, where $u$ is a holomorphic unit and $S$ is a
symmetric $(n-r)\times (n-r)$ matrix. Then the local \(b\)-function 
of \(f\) at \(x\) equals the local \(b\)-function of \(\det S\) at the 
origin, which leads to the local formula~\eqref{eqs:local-b-fun}.
\end{proof}

\begin{proof}[Proof of Theorem~\ref{thm:main}]
Compare the local topological zeta function formula 
\(Z^{\mathrm{top}}_{f,x}(s)\) in~\eqref{eq:local-zeta-closed}
with $b_{f,x}(s)$. We have
\[
  b_{f,x}(s)Z^{\mathrm{top}}_{f,x}(s)\in\C[s].
\]
The global assertion follows from $Z^{\mathrm{top}}_f=Z^{\mathrm{top}}_{f,0}$
and $b_f=b_{f,0}$.
\end{proof}

\subsection*{Data availability}
No data were generated or analysed in this manuscript.

\end{document}